\newtheorem{proposition}{Proposition}[section]
\newtheorem{lemma}[proposition]{Lemma}
\newtheorem{corollary}[proposition]{Corollary}
\newtheorem{theorem}[proposition]{Theorem}
\theoremstyle{definition}
\newtheorem{definition}[proposition]{Definition}
\newtheorem{examples}[proposition]{Examples}
\newtheorem{remark}[proposition]{Remark}
\newtheorem{remarks}[proposition]{Remarks}
\newcommand{\thlabel}[1]{\label{th:#1}}
\newcommand{\thref}[1]{Theorem~\ref{th:#1}}
\newcommand{\selabel}[1]{\label{se:#1}}
\newcommand{\lelabel}[1]{\label{le:#1}}
\newcommand{\prlabel}[1]{\label{pr:#1}}
\newcommand{\prref}[1]{Proposition~\ref{pr:#1}}
\newcommand{\colabel}[1]{\label{co:#1}}
\newcommand{\coref}[1]{Corollary~\ref{co:#1}}
\newcommand{\delabel}[1]{\label{de:#1}}
\newcommand{\eqlabel}[1]{\label{eq:#1}}
\newcommand{\equref}[1]{(\ref{eq:#1})}
\newcommand{\Aut}{{\rm Aut}\,}
\newcommand{\im}{{\rm Im}\,}
\def\ZZ{{\mathbb Z}}
\newcommand{\Cc}{\mathcal{C}}
\def\*C{{}^*\hspace*{-1pt}{\Cc}}
\def\text#1{{\rm {\rm #1}}}
\def\cat{{{}_\Gamma\mathcal{C}}}
\def\Cat{{\mathcal{C}_\Gamma}}
\begin{document}

\title[Crossed product of groups. Applications]
{Crossed product of groups. Applications}

\author{A. L. Agore}\thanks{The authors where supported by
CNCSIS grant 24/28.09.07 of PN II "Groups, quantum groups, corings
and representation theory".}
\address[A.L.A., G.M.]{Faculty of Mathematics and Computer Science,
University of Bucharest, Str. Academiei 14, RO-010014 Bucharest 1,
Romania} \email{ana.agore@fmi.unibuc.ro}
\email{gigel.militaru@gmail.com or gigel.militaru@fmi.unibuc.ro}
\author{G. Militaru}\subjclass{20A05, 20E22, 20J15}

\keywords{The extension problem, crossed product}

\begin{abstract}{We survey the extensions of a group by a group using
crossed products instead of exact sequences of groups. The
approach has various advantages, one of them being that the
crossed product is a universal object. Several new applications
are given, a general Schreier type theorem is proved and a few
open problems are posed.}
\end{abstract}

\maketitle

\section*{Introduction}
In 1895, in a long paper on extensions of groups, O. L. H\"{o}lder
\cite{holder} launched one of the most interesting problems of
algebra: \emph{the extension problem}. Let $H$ and $G$ be two
groups. The extension problem consists of describing and
classifying all groups $E$ containing $H$ as a normal subgroup
such that $E/H \cong G$. The meaning of describing and,
especially, of classifying these structures is actually part of
the problem. The extension problem has been the starting point of
new subjects in mathematics such as cohomology of groups
(\cite{adem}), homological algebra (\cite{Wibel}), crossed
products of groups acting on algebras (\cite{pasman}), crossed
products of Hopf algebras acting on algebras (\cite{BCM}), crossed
products for von Neumann algebras \cite{nakagami} etc.

The first notable result regarding the extension problem was given
by O. L. H\"{o}lder himself (\thref{crossedciclice} below), who
uses generators and relations to describe all extensions of a
finite cyclic group by another finite cyclic group. The second
major contribution regarding this problem was given by O. Schreier
in 1926: he classified all extensions in case $H$ is abelian and
the morphisms are defined such that they stabilize $H$ and $G$.
The set of equivalence classes of extensions (via the equivalence
relation that stabilizes $H$ and $G$) is in a 1-1 correspondence
with the cohomology group $H^ 2 (G, H)$. The third and most
important contribution to the extension problem was given in 1947
by S. Eilenberg and S. MacLane in two fundamental papers
\cite{EM}. In general, no solution is known for the general
classification problem (cf. \cite[page 155]{rotman}) although J.
Baez recently stated in \cite{baez} that the above extensions are
classified by weak $2$-functors $G \to AUT (H)$.

The classification part of the extension problem requires first of
all a definition of the morphisms between such objects. In other
words, we have to define the category of extensions of a group $H$
by a group $G$. Surprisingly, these morphisms have so far received
only one definition, which is not the only one possible. More
precisely: a morphism between two extensions $(E, i, p)$ and $(E',
i', p')$ of $H$ by $G$ is a morphism of groups $\gamma :
E\rightarrow E'$ for which the following diagram is commutative:
$$\begin{diagram}
1 \rTo & H & \rTo^{i} & E & \rTo^{p} & G & \rTo & 1\\
& \dTo_{Id_{H}} & & \dTo_{\gamma} & & \dTo_{Id_{G}} & \\
1 \rTo & H & \rTo^{i'} & E' & \rTo^{p'} & G & \rTo & 1
\end{diagram}$$
The category $\mathcal{E}_1 (H, G)$ obtained this way is a
groupoid (i.e. any morphism is an isomorphism) and all
classification results proved until now on the extension problem
were related only to this category. Is this the only way to define
a morphism between two extensions? Of course not, they can be
defined in at least two other ways, one being the following, which
is the most natural from the point of view of category theory: a
morphism between two extensions $(E, i, p)$ and $(E', i', p')$ of
$H$ be $G$ is a triple $(u, \gamma, v)$, where $u: H \to H$,
$\gamma : E \to E'$, $v: G \to G$ are morphisms of groups such
that the following diagram is commutative:
$$\begin{diagram}
1 \rTo & H & \rTo^{i} & E & \rTo^{p} & G & \rTo & 1\\
& \dTo_{u} & & \dTo_{\gamma} & & \dTo_{v} & \\
1 \rTo & H & \rTo^{i'} & E' & \rTo^{p'} & G & \rTo & 1
\end{diagram}$$
This way we obtain a new and very different category
$\mathcal{E}_2 (H, G)$, which is not a groupoid any more and is
connected to $\mathcal{E}_1 (H, G)$ via a faithful functor
$$
F : \mathcal{E}_1 (H, G) \to \mathcal{E}_2 (H, G), \quad F \bigl(
(E, i, p) \bigl) = (E, i, p), \quad F (\gamma) = (Id_H, \gamma,
Id_G)
$$
In this paper we will approach the classification part of the
extension problem by defining the morphisms as above. The main
result (\thref{teoprinci}) provides this classification. In order
to do this, we had to replace the extensions $(E, i, p)$ with the
equivalent concept of crossed systems $(H, G, \alpha, f)$. The
transition from extensions to crossed systems is quite natural if
we ask ourselves the following elementary question:

\emph{Let $H$ be a group and $E$ be a set such that $H \subseteq
E$. What are all the group structures $(E, \cdot)$ that can be
defined on the set $E$ such that $H \unlhd E$ is a normal subgroup
of $E$?}

Let $(E, \cdot)$ be such a group structure and $G := E/H$ be its
quotient group. Then, as a set, $E \cong H \times G$ and hence the
problem can be restated as follows: let $H$ and $G$ be two groups;
what are all the group structures that can be defined on the set
$H\times G$ with the property that $H\cong H\times \{ 1\} \unlhd H
\times G$? The set of these structures is in a one to one
correspondence with the set of all crossed systems $(H, G, \alpha,
f)$, where $\alpha : G \rightarrow \Aut (H)$ is a weak action and
$f : G \times G \rightarrow H$ is an $\alpha$-cocycle. Fixing the
groups $H$ and $G$ and denoting by ${\rm Crossed} \, (H, G)$ the
set of all normalized crossed systems we shall identify three
categories having the same class of objects, namely the set ${\rm
Crossed} \, (H, G)$. Thus, the classification part of the
extension problem can be restated in a much more precise,
categorical way: describe the skeleton of these categories. The
main result of this paper describes the skeleton of the category
$\mathcal{E}_2 (H, G)$ (\thref{teoprinci}). The crossed product
satisfies two universal properties: it is an initial object in a
category but also a final object in another category which is not
a dual of the first one (\thref{2.1.8}). This result has several
applications; in particular, the set of all (iso)morphisms between
two crossed products $H \#_{\alpha}^{f} \, G$ and $H
\#_{\alpha'}^{f'}\, G$ is explicitly described. It is in a one to
one correspondence with the set of all quadruples $(u, r, v, s)$,
where $u : H \to H$, $r: G\to H$, $ v: G \to G$ are maps, and $s:
H \to G$ is a morphism of groups satisfying certain compatibility
conditions (\coref{morfisme1}, \coref{morfisme2}). On the route
other interesting results are derived and a few open questions are
posed.

\section{Preliminaries}\selabel{1}

\subsection{Definitions and notation}\selabel{1.1}
Let us fix the notations that will be used throughout the paper.
$|A|$ denotes the number of elements of a finite set $A$ and $C_n$
will be a cyclic group of order $n$ generated by $a$: $C_n = \{1,
a, a^2, \cdots , a^{n-1} \}$. Let $H$ and $G$ be two groups. $\Aut
(H)$ denotes the group of automorphisms of a group $H$ and $Z(H)$
the center of $H$. A map $f : G \times G \rightarrow H$ is called
\emph{symmetric} if $f (g_1, g_2) = f (g_2, g_1)$ for any $g_1$,
$g_2 \in G$. For a map $\alpha : G \rightarrow \Aut (H)$ we shall
use the notation
$$
\alpha (g) (h) = g\triangleright h
$$
for all $g\in G$ and $h\in H$. As $\alpha (g) \in \Aut (H)$ we
have that
\begin{equation}\eqlabel{8cross}
g \triangleright 1 = 1, \qquad g \triangleright (h_1h_2) = (g
\triangleright h_1) (g \triangleright h_2), \qquad g
\triangleright h^{-1} = (g \triangleright h)^{-1}
\end{equation}
for any $g\in G$ and $h$, $h_1$, $h_2\in H$. The map $\alpha$ is
called \emph{trivial} if $g\triangleright h = h$ for all $g\in G$
and $h\in H$. If $\alpha$ is a morphism of groups we denote by $H
\ltimes_{\alpha} G$ the semidirect product of $H$ and $G$: $H
\ltimes_{\alpha} G = H\times G$ as a set with the multiplication
given by
$$
(h_1,\, g_1)\cdot (h_2, \, g_2) : = \bigl( h_1 (g_1\triangleright
h_2), \, g_1 g_2 \bigl)
$$
for all $h_1$, $h_2 \in H$, $g_1$, $g_2 \in G$.

An \emph{extension of $H$ by $G$} is a triple $(E, i, \pi)$, where
$E$ is a group, $i : H \to E$ and $\pi : E \to G$ are morphisms of
groups such that the sequence
$$
\begin{diagram}
1 \rTo & H & \rTo^{i} & E & \rTo^{\pi} & G & \rTo & 1
\end{diagram}
$$
is exact.

\subsection{Crossed product of groups}\selabel{1.2} We recall now
a fundamental construction at the level of groups. It has served
as a model for later generalizations at the level of, e.g., groups
acting on rings \cite{pasman}, Hopf algebras acting on algebras
\cite{BCM}, von Neumann algebras \cite{nakagami}, quantum
groupoids \cite{BB}.

\begin{definition}\delabel{crossedsystem}
A \textit{crossed system} of groups is a quadruple $\Gamma = (H,
G, \alpha, f)$, where $H$ and $G$ are two groups, $\alpha : G
\rightarrow \Aut (H)$ and $f : G \times G \rightarrow H$ are two
maps such that the following compatibility conditions hold:
\begin{eqnarray}
g_1 \triangleright (g_2\triangleright h) &=& f(g_1, g_2) \,
\bigl((g_1g_2)\triangleright  h \bigl)\, f(g_1, g_2)^{-1}
\eqlabel{WA} \\
f(g_1,\, g_2)\, f(g_1 g_2, \, g_3) &=&  \bigl(g_1 \triangleright
f(g_2, \, g_3) \bigl) \, f(g_1, \, g_2g_3) \eqlabel{CC}
\end{eqnarray}
for all $g_1$, $g_2$, $g_3 \in G$ and $h\in H$. The crossed system
$\Gamma = (H, G, \alpha, f)$ is called \textit{normalized} if
$f(1, 1) = 1$. The map $\alpha : G \rightarrow \Aut (H)$ is called
a \textit{weak action} and $f : G \times G \rightarrow H$ is
called an $\alpha$-\textit{cocycle}.
\end{definition}

We denote by ${\rm Crossed} \, (H, G)$ the set of all normalized
crossed systems:
$${\rm Crossed} \, (H, G) = \{ (\alpha, f) \, | \, (H,
G, \alpha, f) \, {\rm is \, a \, normalized \, crossed \, system}
\}
$$
We note that if ${\rm Im} (f) \subseteq Z(H)$ the condition
\equref{WA} is equivalent to the fact that $\alpha$ is an action:
$(g_1 g_2)\triangleright h = g_1 \triangleright (g_2\triangleright
h)$, for all $g_1$, $g_2 \in G$ and $h\in H$. First we give some
useful formulas for a crossed system.

\begin{lemma}\lelabel{forcross}
Let $(H, G, \alpha, f)$ be a crossed system. Then
\begin{eqnarray}
 f(g, 1) &=& g \triangleright f(1,1) \eqlabel{5cross} \\
 1\triangleright h &=& f(1,1) \, h \, f(1,1)^{-1} \eqlabel{6cross}
 \\
f(1, g)  &=& f(1,1) \eqlabel{7cross}
\end{eqnarray}
for any $g\in G$ and $h\in H$. In particular, if $(H, G, \alpha,
f)$ is a normalized crossed system then
\begin{equation}\eqlabel{norm2}
f (1, g) = f (g, 1) = 1 \qquad {\rm and} \qquad 1 \triangleright h
= h
\end{equation}
for any $g \in G$ and $h\in H$.
\end{lemma}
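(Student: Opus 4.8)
The plan is to read off all four identities from the two defining axioms \equref{WA} and \equref{CC} by specializing the group variables to the identity element and then exploiting cancellation in $H$ together with the bijectivity of each $\alpha(g)$. The only genuinely structural point is the order of the arguments: \equref{6cross} must be proved first, as both \equref{7cross} and the normalized reduction depend on it.

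First I would establish \equref{6cross}. Setting $g_1 = g_2 = 1$ in the weak-action axiom \equref{WA} gives
$$1 \triangleright (1 \triangleright h) = f(1,1)\, \bigl(1 \triangleright h\bigr)\, f(1,1)^{-1}$$
for all $h \in H$. Since $\alpha(1) \in \Aut (H)$ is bijective, the element $1 \triangleright h$ ranges over all of $H$ as $h$ does; renaming it yields exactly \equref{6cross}.

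Next I would extract \equref{5cross} from the cocycle axiom \equref{CC} by putting $g_2 = g_3 = 1$ and keeping $g_1 = g$ arbitrary, which gives
$$f(g, 1)\, f(g, 1) = \bigl(g \triangleright f(1,1)\bigr)\, f(g, 1)$$
and cancelling the common right factor $f(g,1)$ in $H$ produces \equref{5cross}. For \equref{7cross} I would instead set $g_1 = g_2 = 1$ and leave $g_3 = g$ arbitrary in \equref{CC}, obtaining
$$f(1,1)\, f(1, g) = \bigl(1 \triangleright f(1, g)\bigr)\, f(1, g)$$
so that $1 \triangleright f(1,g) = f(1,1)$ after cancelling $f(1,g)$. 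Substituting the already-proved formula \equref{6cross} for the left-hand side gives $f(1,1) = f(1,1)\, f(1,g)\, f(1,1)^{-1}$, whence $f(1,g) = f(1,1)$ after multiplying by $f(1,1)$ on the right and cancelling on the left; this is \equref{7cross}.

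Finally, the normalized statement \equref{norm2} follows by imposing $f(1,1) = 1$: then \equref{5cross} reads $f(g,1) = g \triangleright 1 = 1$ using the first relation in \equref{8cross}, \equref{7cross} reads $f(1,g) = f(1,1) = 1$, and \equref{6cross} collapses to $1 \triangleright h = h$. All of the manipulations are routine group cancellations; the one place requiring care is the substitution in the proof of \equref{6cross}, where the legitimacy of renaming $1 \triangleright h$ rests precisely on $\alpha(1)$ being an automorphism rather than merely a map.
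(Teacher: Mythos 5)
Your proof is correct and follows essentially the same route as the paper: \equref{5cross} from \equref{CC} with $g_2=g_3=1$, \equref{6cross} from \equref{WA} with $g_1=g_2=1$ using surjectivity of $\alpha(1)$, and \equref{7cross} from \equref{CC} with $g_1=g_2=1$ combined with \equref{6cross}. The only difference is that you prove \equref{6cross} before \equref{5cross}, which is immaterial since the latter does not depend on the former.
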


\begin{proof}
The condition \equref{CC} for $g_2 = g_3 = 1$ and $g_1 = g$ gives
\equref{5cross}. Now if we set $g_1 = g_2 =1$ in \equref{WA} and
take into account that $\alpha (1)$ is surjective we obtain
\equref{6cross}. If we set $g_1 = g_2 = 1$ and $g_3 = g$ in
\equref{CC} and take into account \equref{6cross} we obtain
\equref{7cross}.
\end{proof}

Let $H$ and $G$ be groups, $\alpha : G \rightarrow \Aut (H)$ and
$f : G \times G \rightarrow H$ two maps. Let $H \#_{\alpha}^{f} \,
G : = H\times G$ as a set with a binary operation defined by the
formula:
\begin{equation}\eqlabel{4}
(h_1,\, g_1)\cdot (h_2, \, g_2) : = \bigl( h_1 (g_1 \triangleright
h_2) f(g_1, \, g_2), \, g_1g_2 \bigl)
\end{equation}
for all $h_1$, $h_2 \in H$, $g_1$, $g_2 \in G$.

The following theorem gives the construction of the crossed
product of groups. It is unfortunately difficult to refer to a
place that would contain the proof of this version of theorem (see
\cite{adem}, \cite{alperin}, \cite{brown}, \cite{bechtell},
\cite{grillet}, \cite{hump}, \cite{rotman}, \cite{Wibel}).
Therefore, for convenience purposes, we present a short proof
below:

\begin{theorem}\thlabel{3}
Let $H$ and $G$ be groups $\alpha : G \rightarrow \Aut (H)$ and $f
: G \times G \rightarrow H$ two maps. The following statements are
equivalent:
\begin{enumerate}
\item The multiplication on $H \#_{\alpha}^{f} \, G$ given by
\equref{4} is associative.

\item $(H, G, \alpha, f)$ is a crossed system.
\end{enumerate}
In this case $\bigl ( H \#_{\alpha}^{f} \, G, \, \cdot \bigl)$ is
a group with the unit $1_{H \#_{\alpha}^{f} \, G} = \bigl( f(1, 1)
^{-1}, \, 1\bigl)$ called the \textit{crossed product of $H$ and
$G$} associated to the crossed system $(H, G, \alpha, f)$.
\end{theorem}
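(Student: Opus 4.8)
The plan is to prove the equivalence $(1)\Leftrightarrow(2)$ by a direct computation comparing the two ways of associating a triple product, and then to verify the group axioms in the associative case. First I would fix three general elements $(h_1,g_1)$, $(h_2,g_2)$, $(h_3,g_3)$ in $H\times G$ and compute both $\bigl((h_1,g_1)\cdot(h_2,g_2)\bigr)\cdot(h_3,g_3)$ and $(h_1,g_1)\cdot\bigl((h_2,g_2)\cdot(h_3,g_3)\bigr)$ using the defining formula \equref{4}. Since the $G$-component of both products is simply $g_1g_2g_3$ regardless of associativity, the entire content lives in the $H$-component. Expanding both sides and using the homomorphism property \equref{8cross} of each $\alpha(g)$ to distribute $g_1\triangleright(-)$ over products, I expect the two $H$-components to reduce to expressions whose equality, for all choices of $h_i$ and $g_i$, is \emph{equivalent} to the simultaneous validity of \equref{WA} and \equref{CC}.

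The key step is to isolate how the two cocycle-type identities emerge. The terms involving $h_3$ sandwiched inside nested actions $g_1\triangleright(g_2\triangleright h_3)$ versus $(g_1g_2)\triangleright h_3$ will force the weak-action condition \equref{WA}, once one cancels the freely varying factors coming from $h_1,h_2$; to extract \equref{WA} cleanly I would specialize $h_1=h_2=1$ and let $h_3=h$ vary. The purely cocycle terms $f(g_1,g_2)f(g_1g_2,g_3)$ against $\bigl(g_1\triangleright f(g_2,g_3)\bigr)f(g_1,g_2g_3)$ will force \equref{CC}; to extract it I would instead specialize $h_1=h_2=h_3=1$. Conversely, assuming both \equref{WA} and \equref{CC} hold, the same expansion shows the two $H$-components coincide identically, giving associativity. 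The main obstacle will be the bookkeeping: carefully tracking the order of the non-commuting factors in $H$ and inserting \equref{WA} in the correct conjugated form so that the $f(g_1,g_2)^{\pm1}$ pairs telescope.

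For the final assertions, assume $(H,G,\alpha,f)$ is a crossed system and write $e:=\bigl(f(1,1)^{-1},1\bigr)$. To check $e$ is a two-sided unit I would compute $e\cdot(h,g)$ and $(h,g)\cdot e$ directly from \equref{4}, using the formulas from \leref{forcross}: the relations \equref{5cross}, \equref{6cross}, \equref{7cross} are precisely what is needed to collapse the action and cocycle terms involving the identity of $G$. For instance, $e\cdot(h,g)=\bigl(f(1,1)^{-1}\,(1\triangleright h)\,f(1,g),\,g\bigr)$, and substituting \equref{6cross} and \equref{7cross} yields $(h,g)$; the right-unit computation is symmetric and uses \equref{5cross}. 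Finally, for inverses I would posit the candidate
$$
(h,g)^{-1}=\Bigl(f(1,1)\,\bigl(g^{-1}\triangleright(f(g,g^{-1})^{-1}h^{-1})\bigr),\,g^{-1}\Bigr)
$$
and verify that multiplying on either side returns $e$, again reducing the action and cocycle terms via \leref{forcross}. Since associativity is already established and we exhibit a two-sided unit together with two-sided inverses, $\bigl(H\#_{\alpha}^{f}\,G,\cdot\bigr)$ is a group, completing the proof.
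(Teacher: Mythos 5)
Your plan for the equivalence $(1)\Leftrightarrow(2)$ and for the unit is essentially the paper's proof: both reduce associativity to a single identity in the $H$-component (the common prefix $h_1(g_1\triangleright h_2)$ cancels), extract \equref{CC} by killing the $h$-variable, then use \equref{CC} to peel off the cocycle factors and obtain \equref{WA}; the unit verification via \equref{5cross}--\equref{7cross} is also identical. The one genuine error is your candidate for the inverse. The element
$$
\Bigl(f(1,1)\,\bigl(g^{-1}\triangleright(f(g,g^{-1})^{-1}h^{-1})\bigr),\,g^{-1}\Bigr)
$$
is not an inverse of $(h,g)$ for a general (non-normalized) crossed system, which is the setting of the theorem. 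Computing $(h,g)\cdot(b,g^{-1})$ with your $b$ and simplifying via \equref{WA}, \equref{5cross} and \equref{6cross}, the $H$-component comes out to $h\,f(g,1)\,f(g,g^{-1})\,f(1,1)\,f(g,g^{-1})^{-1}h^{-1}f(1,1)^{-1}$, which equals the unit's component $f(1,1)^{-1}$ only when $f(g,1)=f(g,g^{-1})f(1,1)^{-1}f(g,g^{-1})^{-1}$ --- true if $f(1,1)=1$ but false in general. A concrete counterexample: $H=C_4=\langle x\rangle$, $G=C_2=\{1,a\}$, trivial action, $f(1,1)=f(1,a)=f(a,1)=x$, $f(a,a)=1$; this is a crossed system with unit $(x^{-1},1)=(x^3,1)$, yet your formula gives $(1,a)^{-1}=(x,a)$ and $(x,a)\cdot(1,a)=(x,1)\neq(x^3,1)$.

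The fix is simply to solve $a\,(g^{-1}\triangleright h)\,f(g^{-1},g)=f(1,1)^{-1}$ for $a$, which yields the correct left inverse
$$
(h,g)^{-1}=\bigl(f(1,1)^{-1}f(g^{-1},g)^{-1}(g^{-1}\triangleright h^{-1}),\,g^{-1}\bigr),
$$
as in the paper; note your expression agrees with this one precisely when the crossed system is normalized, which is why it looks plausible. A further small economy the paper uses, which you could adopt: once associativity and the unit are established, it suffices to exhibit a \emph{left} inverse for each element, since a monoid in which every element has a left inverse is a group; this spares you the second (right-sided) verification.
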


\begin{proof} For $h_1$, $h_2$, $h_3 \in H$ and
$g_1$, $g_2$, $g_3 \in G$ we have
$$
[(h_1, g_1) \cdot (h_2, g_2)]\cdot (h_3, g_3) = ( h_1
(g_1 \triangleright h_2) \underline{f(g_1, g_2)
((g_1g_2)\triangleright h_3) f(g_1g_2, g_3)}, \,
 g_1g_2g_3)
$$
and
$$
(h_1, g_1)\cdot[(h_2, g_2)\cdot(h_3, g_3)] = ( h_1 (g_1
\triangleright h_2) \underline{\bigl(g_1 \triangleright (g_2
\triangleright h_3 ) \bigl) \bigl( g_1 \triangleright f(g_2, g_3)
\bigl) f(g_1, g_2g_3)}, \, g_1g_2g_3)
$$
Hence, the multiplication given by \equref{4} is associative if
and only if
\begin{equation}\eqlabel{WACC}
f(g_1, g_2) ((g_1g_2)\triangleright h_3) f(g_1g_2, g_3) =
\bigl(g_1 \triangleright (g_2 \triangleright h_3 ) \bigl) \bigl(
g_1 \triangleright f(g_2, g_3) \bigl) f(g_1, g_2g_3)
\end{equation}
for all $g_1$, $g_2$, $g_3 \in G$ and $h_3 \in H$. We shall prove
now that \equref{WACC} holds if and only if \equref{WA} and
\equref{CC} holds.

Assume first that \equref{WA} and \equref{CC} holds. Then
\begin{eqnarray*}
\underline{ f(g_1, g_2) \, \bigl( (g_1g_2)\triangleright h_3
\bigl)} f(g_1g_2, g_3) &\stackrel{\equref{WA}} {=}& \bigl( g_1
\triangleright (g_2
\triangleright h_3 ) \bigl) f (g_1, g_2) f(g_1g_2, g_3) \\
&\stackrel{\equref{CC}} {=}& \bigl(g_1 \triangleright (g_2
\triangleright h_3 ) \bigl) \bigl( g_1 \triangleright f(g_2, g_3)
\bigl) f(g_1, g_2g_3)
\end{eqnarray*}
i.e. \equref{WACC} holds. Conversely, assume that \equref{WACC}
holds. Using \equref{8cross} after we specialize $h_3 = 1$ in
\equref{WACC} we obtain \equref{CC}. Now,
\begin{eqnarray*}
\Bigl( g_1 \triangleright (g_2\triangleright h) \Bigl) f(g_1, g_2)
&\stackrel{\equref{WACC}} {=}& f(g_1, g_2) \bigl((g_1 g_2)
\triangleright h\bigl) f(g_1g_2, g_3) \\
&& [(g_1 \triangleright f(g_2, g_3)) f(g_1, g_2g_3)]^{-1} f(g_1, g_2) \\
&\stackrel{\equref{CC}} {=}& f(g_1, g_2) \bigl((g_1 g_2)
\triangleright h \bigl) f(g_1g_2, g_3) f(g_1g_2, g_3)^{-1}
\\
&&f(g_1, g_2)^{-1} f(g_1, g_2)\\
&=& f(g_1, g_2) \bigl((g_1 g_2) \triangleright h \bigl)
\end{eqnarray*}
i.e. \equref{WA} holds; hence the first part of the theorem is
proved.

We assume now that $(H, G, \alpha, f)$ is a crossed system and we
prove that $\bigl ( H \#_{\alpha}^{f} \, G, \, \cdot \bigl)$ is a
group. For $h\in H$ and $g\in G$ we have
\begin{eqnarray*}
(h, g)\cdot(f(1, 1)^{-1}, 1) &=& \Bigl ( h \bigl(g\triangleright
(f(1,1)^{-1})\bigl ) f(g, 1), \, g \Bigl) \\
&\stackrel{\equref{5cross}} =& \Bigl( h (g\triangleright
(f(1,1)^{-1})\,
(g\triangleright f(1,1)), \, g \Bigl) \\
& \stackrel{\equref{8cross}}  = & \Bigl
(h (g \triangleright (f(1,1)^{-1} f(1,1)) ) , \, g\Bigl) \\
&=& (h (g\triangleright 1 ), g) = (h, g)
\end{eqnarray*}
and
\begin{eqnarray*}
(f(1, 1)^{-1}, 1) \cdot (h, g) &=& \bigl( f(1, 1)^{-1} (1
\triangleright
h ) f(1, g), \, g\bigl ) \\
&\stackrel{\equref{6cross}} =& \bigl( f(1, 1)^{-1} f(1, 1) h f(1,
1)^{-1} f(1,g), \, g \bigl ) \\
&\stackrel{\equref{7cross}} =& \bigl (h f(1, 1)^{-1} f(1, 1), \, g
\bigl) = (h, g)
\end{eqnarray*}
i.e. $(f(1, 1)^{-1}, 1)$ is the unit of $\bigl ( H \#_{\alpha}^{f}
\, G, \, \cdot \bigl)$. Let now $(h, g) \in H \#_{\alpha}^{f} \,
G$. Then it is easy to see that
$$
(h, g)^{-1} = \bigl(f(1, 1) ^{-1} f(g^{-1}, g) ^{-1} (g^{-1}
\triangleright h^{-1}), \, g^{-1} \bigl)
$$
is a left inverse of $(h, g)$. Thus $H \#_{\alpha}^{f} \, G$ is a
monoid  and any element of it has a left inverse. Then $H
\#_{\alpha}^{f} \, G$ is a group and we are done.
\end{proof}

Let $\Gamma = (H, G, \alpha, f)$ be a normalized crossed system.
Then in the crossed product  $H \#_{\alpha}^{f} \, G$ we have:
\begin{equation}\eqlabel{2.1.gen}
(h, 1) \cdot (1, g) = (h, g)
\end{equation}
for any $h\in H$ and $g\in G$. Thus $(H\times \{1\}) \cup (\{1\}
\times G)$ is a set of generators of the group $H \#_{\alpha}^{f}
\, G$. An extension of $H$ by $G$ is associated to any crossed
system as follows:

\begin{corollary}\colabel{zz}
Let $(H, G, \alpha, f)$ be a crossed system. Then
\begin{equation}\eqlabel{sir5}
\begin{diagram}
1 \rTo & H & \rTo^{i_H} & H \#_{\alpha}^{f} \, G & \rTo^{\pi_G} & G
& \rTo & 1
\end{diagram}
\end{equation}
where $i_H (h):= \bigl( hf(1, 1) ^{-1}, 1\bigl)$ and $\pi_G (h,
g): = g$ for all $h \in H$ and $g\in G$ is an exact sequence of
groups, i.e. $(H \#_{\alpha}^{f} \, G, i_H, \pi_G)$ is an
extension of $H$ by $G$.
\end{corollary}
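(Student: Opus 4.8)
The plan is to verify directly the three defining properties of an exact sequence of groups: that $i_H$ is an injective homomorphism, that $\pi_G$ is a surjective homomorphism, and that $\im(i_H) = \Ker(\pi_G)$. Throughout I may use \thref{3}, which guarantees that $H\#_{\alpha}^{f}\,G$ is a group with unit $\bigl(f(1,1)^{-1}, 1\bigr)$, the multiplication formula \equref{4}, and the identity \equref{6cross} from \leref{forcross}.

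First I would check that $i_H$ is a group homomorphism. Using \equref{4} to multiply $i_H(h_1)\cdot i_H(h_2) = \bigl(h_1 f(1,1)^{-1}, 1\bigr)\cdot\bigl(h_2 f(1,1)^{-1}, 1\bigr)$, the second coordinate is trivially $1$, while the first coordinate equals $h_1 f(1,1)^{-1}\,\bigl(1\triangleright(h_2 f(1,1)^{-1})\bigr)\,f(1,1)$. The one genuinely computational point is to rewrite the factor $1\triangleright(h_2 f(1,1)^{-1})$ by means of the conjugation formula \equref{6cross}, namely $1\triangleright h = f(1,1)\,h\,f(1,1)^{-1}$; after this substitution all the $f(1,1)$ factors cancel and one is left with $\bigl(h_1 h_2 f(1,1)^{-1}, 1\bigr) = i_H(h_1 h_2)$. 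The twist by $f(1,1)^{-1}$ in the definition of $i_H$ is exactly what is needed to absorb the conjugation coming from \equref{6cross}, and this is the only place where the normalization subtlety enters. Injectivity is then immediate: since right multiplication by $f(1,1)^{-1}$ is a bijection of $H$, the equality $i_H(h_1) = i_H(h_2)$ forces $h_1 = h_2$.

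Next I would verify that $\pi_G$ is a surjective homomorphism. Because the second coordinate of the product in \equref{4} is simply $g_1 g_2$, we get at once $\pi_G\bigl((h_1, g_1)\cdot(h_2, g_2)\bigr) = g_1 g_2 = \pi_G(h_1, g_1)\,\pi_G(h_2, g_2)$, so $\pi_G$ is multiplicative (and sends the unit $\bigl(f(1,1)^{-1}, 1\bigr)$ to $1$). Surjectivity is clear, since $\pi_G(1, g) = g$ for every $g\in G$.

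Finally, for exactness at the middle term I would compute both sides explicitly. On the one hand $\Ker(\pi_G) = \{(h, g)\mid g = 1\} = H\times\{1\}$; on the other hand, since $h\mapsto h f(1,1)^{-1}$ ranges over all of $H$, we have $\im(i_H) = \{\bigl(h f(1,1)^{-1}, 1\bigr)\mid h\in H\} = H\times\{1\}$, whence $\im(i_H) = \Ker(\pi_G)$. I expect no real obstacle: once \thref{3} supplies the group structure, the statement is a routine verification, the only bookkeeping being the appearance of the unit $f(1,1)^{-1}$ in the first coordinate, which is dispatched cleanly by \equref{6cross}.
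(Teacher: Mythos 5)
Your proposal is correct, and it is exactly the routine verification the paper intends: the paper states \coref{zz} without proof, treating it as an immediate consequence of \thref{3}. Your check of the three exactness conditions, and in particular the observation that the twist by $f(1,1)^{-1}$ in $i_H$ is precisely what absorbs the conjugation $1\triangleright h = f(1,1)\,h\,f(1,1)^{-1}$ from \equref{6cross} so that $i_H$ becomes multiplicative, is the right (and only nontrivial) computation.
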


\begin{examples}
1. Let $H$ and $G$ be two groups and $\alpha$, $f$ be the trivial
maps: i.e. $\alpha (g)(h) = h$ and $f (g_1, g_2) = 1$ for all $g$,
$g_1$, $g_2 \in G$ and $h \in H$. Then $\Gamma = (H, G, \alpha,
f)$ is a crossed system called the \textit{trivial crossed
system}. The crossed product $H \#_{\alpha}^{f} \, G = H\times G$,
the direct product of $H$ and $G$.
\newline 2. Let $H$ and $G$ be two groups and $f: G\times G \rightarrow H$ the
trivial map. Then $(H, G, \alpha, f)$ is a crossed system if and
only if $\alpha : G \rightarrow \Aut (H)$ is a morphism of groups.
In this case the crossed product $H \#_{\alpha}^{f} \, G = H
\ltimes_{\alpha} G$, the semidirect product of $H$ and $G$.
\newline 3. Let $H$ and $G$ be two groups and $\alpha : G
\rightarrow \Aut (H)$ the trivial action. Then $(H, G, \alpha, f)$
is a crossed system if and only if $\im (f) \subseteq Z(H)$ and
\begin{equation}\eqlabel{CCpropiu}
f(g_1, \, g_2) f(g_1g_2, \, g_3) = f(g_2, \, g_3) f(g_1, \,
g_2g_3)
\end{equation}
for all $g_1$, $g_2$, $g_3 \in G$, that is $f : G\times G
\rightarrow Z(H)$ is a $2$-cocycle as they appear in the abelian
cohomology of groups (\cite{adem}, \cite{rotman}, \cite{Wibel}).
The crossed product $H \#_{\alpha}^{f} \, G$ associated to this
crossed system will be denoted by $H\times^{f} \, G$ and we shall
call it the \textit{twisted product}\footnote{We borrowed the
terminology from groups acting on $k$-algebras} of $H$ and $G$
associated to the $2$-cocycle $f: G\times G \rightarrow Z(H)$.
Explicitly, the multiplication of a twisted product of groups
$H\times^{f} \, G$ is given by the formula:
\begin{equation}\eqlabel{tw4}
(h_1,\, g_1)\cdot (h_2, \, g_2) : = \bigl( h_1 h_2 f(g_1, \, g_2),
\, g_1g_2 \bigl)
\end{equation}
for all $h_1$, $h_2 \in H$, $g_1$, $g_2 \in G$.
\end{examples}

The next theorem shows that any extension $(E, i, \pi)$ of $H$ by
$G$ is equivalent to a crossed product extension  $(H
\#_{\alpha}^{f} \, G, i_H, \pi_G)$. It can be also viewed as a
reconstruction theorem of a group from a normal subgroup and the
quotient.

\begin{theorem}\thlabel{4}
Let $(E, i, \pi)$ be an extension of $H$ by $G$. Then there exists
$(H, G, \alpha, f)$ a normalized crossed system and an isomorphism
of groups $\theta: H \#_{\alpha}^{f} \, G \rightarrow E$ such that
the following diagram
$$
\begin{diagram}
1 \rTo & H & \rTo^{i_H} & H \#_{\alpha}^{f} \,
G & \rTo^{\pi_G} & G & \rTo & 1\\
& \dTo_{Id_H} & & \dTo_{\theta} & & \dTo_{Id_G} & \\
1 \rTo & H & \rTo^{i} & E & \rTo^{\pi} & G & \rTo & 1
\end{diagram}
$$
is commutative.
\end{theorem}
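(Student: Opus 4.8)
The plan is to reverse-engineer the crossed system directly from the extension by choosing a set-theoretic section of $\pi$ and reading off $\alpha$ and $f$ from conjugation and from the failure of that section to be multiplicative. Since $\pi$ is surjective, I can fix a map $s : G \to E$ with $\pi(s(g)) = g$ for all $g$ and, crucially, $s(1) = 1$; this last normalization is what will force the resulting crossed system to be normalized. Because the sequence is exact, $i$ is injective and $i(H) = \Ker \pi$ is a normal subgroup of $E$, which is precisely what makes the following two definitions legitimate.

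First I would define the weak action by conjugation: for $g \in G$ and $h \in H$ the element $s(g)\, i(h)\, s(g)^{-1}$ lies in $\Ker \pi = i(H)$ by normality, so there is a unique $g \triangleright h \in H$ with $i(g \triangleright h) = s(g)\, i(h)\, s(g)^{-1}$; injectivity of $i$ shows that $g \triangleright (-)$ is an automorphism of $H$, yielding $\alpha : G \to \Aut (H)$. Next I would define the cocycle through $i\bigl(f(g_1, g_2)\bigr) := s(g_1)\, s(g_2)\, s(g_1 g_2)^{-1}$; this is well defined because applying $\pi$ to the right-hand side gives $g_1 g_2 (g_1 g_2)^{-1} = 1$, so the element lies in $i(H)$. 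The choice $s(1) = 1$ then immediately gives $f(1,1) = 1$.

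Rather than verifying the compatibility conditions \equref{WA} and \equref{CC} by hand, I would obtain them for free via \thref{3}. Consider $\theta : H \times G \to E$, $\theta(h, g) := i(h)\, s(g)$. A short check using $\pi$ and the injectivity of $i$ shows $\theta$ is a bijection; for surjectivity note that any $e \in E$ satisfies $e\, s(\pi(e))^{-1} \in \Ker \pi = i(H)$. The key computation is
\begin{equation*}
\theta(h_1, g_1)\, \theta(h_2, g_2) = i(h_1)\, s(g_1)\, i(h_2)\, s(g_2) = \theta\bigl( h_1 (g_1 \triangleright h_2) f(g_1, g_2),\, g_1 g_2 \bigr),
\end{equation*}
where one inserts $s(g_1)^{-1} s(g_1)$ to rewrite $s(g_1)\, i(h_2)$ as $i(g_1 \triangleright h_2)\, s(g_1)$, and then collapses $s(g_1)\, s(g_2)$ into $i\bigl(f(g_1, g_2)\bigr)\, s(g_1 g_2)$. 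The right-hand side is exactly the multiplication \equref{4}. Thus $\theta$ carries the binary operation \equref{4} on $H \times G$ to the group operation of $E$; since the latter is associative and $\theta$ is a bijection, the operation \equref{4} is associative, and \thref{3} then guarantees that $(H, G, \alpha, f)$ is a crossed system (normalized, since $f(1,1) = 1$) and that $\theta$ is an isomorphism $H \#_{\alpha}^{f} \, G \to E$.

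Finally I would verify that the diagram commutes. As $f(1,1) = 1$, the map $i_H$ of \coref{zz} reduces to $i_H(h) = (h, 1)$, whence $\theta\bigl(i_H(h)\bigr) = i(h)\, s(1) = i(h)$, giving the left square; and $\pi\bigl(\theta(h, g)\bigr) = \pi(i(h))\, \pi(s(g)) = g = \pi_G(h, g)$ gives the right square. The only genuinely delicate points are the well-definedness of $\alpha$ and $f$ — both resting on $i(H) = \Ker \pi$ being normal and on $i$ being injective — together with the need to normalize the section; everything else is formal once $\theta$ is shown to intertwine the two multiplications.
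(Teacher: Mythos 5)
Your proposal is correct and follows essentially the same route as the paper: the same normalized section $s$ with $s(1)=1$, the same conjugation action $\alpha$, the same cocycle $f(g_1,g_2)=s(g_1)s(g_2)s(g_1g_2)^{-1}$, and the same map $\theta(h,g)=i(h)s(g)$. The one place you go beyond the paper, which simply cites the references for the verification that $(H,G,\alpha,f)$ satisfies \equref{WA} and \equref{CC}, is your observation that these axioms follow for free from \thref{3} once $\theta$ is shown to be a bijection transporting the multiplication \equref{4} to that of $E$; this is a clean, self-contained way to supply the omitted check.
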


\begin{proof}
For full details we refer to \cite{rotman}, \cite{Wibel}. We shall
identify $ H \cong i (H) \unlhd E$. The crossed system is
constructed as follows: let $s : G \rightarrow E$ be a section of
$\pi : E \rightarrow G$ such that $s(1) = 1$ and define $\alpha$
and $f$ by the formulas:
\begin{equation}\eqlabel{8}
\alpha : G \rightarrow \Aut (H), \qquad \alpha (g)(h):= s(g)h
s(g)^{-1}
\end{equation}
\begin{equation}\eqlabel{9}
f : G \times G \rightarrow H, \qquad f(g_1, g_2) :=
s(g_1)s(g_2)s(g_1g_2)^{-1}
\end{equation}
for all $g$, $g_1$, $g_2\in G$ and $h\in H$. Then $(H, G, \alpha,
f)$ is a normalized crossed system and
$$
\theta: H \#_{\alpha}^{f} \, G \rightarrow E, \qquad \theta (h,
g):= i(h) s(g)
$$
is an isomorphism of groups and the diagram is commutative: $\pi
\bigl (\theta (h, g) \bigl) = \pi (i(h)) \pi (s(g)) = g = Id_G
\bigl(\pi_G (h, g) \bigl)$, for all $h\in H$ and $g\in G$.
\end{proof}

The next corollary shows that any crossed product of groups is
isomorphic to a crossed product of a normalized crossed system.

\begin{corollary}\colabel{zzz}
Let $(H, G, \alpha, f)$ be a crossed system. Then there exists
$(H, G, \alpha', f')$ a normalized crossed system such that $H
\#_{\alpha}^{f} \, G \cong H \#_{\alpha '}^{f'} \, G$ (isomorphism
of groups).
\end{corollary}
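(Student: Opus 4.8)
The plan is to deduce the statement directly from the two constructions already in hand, \coref{zz} and \thref{4}, without building anything new. The key point is that the crossed product $H \#_{\alpha}^{f} \, G$ attached to an \emph{arbitrary} (possibly non-normalized) crossed system is, by \thref{3}, a genuine group, and \coref{zz} presents it as the middle term of an honest extension of $H$ by $G$, namely the exact sequence \equref{sir5}. Normalizing the crossed system is therefore the same as reconstructing this extension from a \emph{normalized} crossed system, which is precisely what \thref{4} does.

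Concretely, I would set $E := H \#_{\alpha}^{f} \, G$ and first invoke \coref{zz} to view $(E, i_H, \pi_G)$ as an extension of $H$ by $G$, where $i_H(h) = (h f(1,1)^{-1}, 1)$ and $\pi_G(h, g) = g$. I would then feed this extension into \thref{4}. That theorem returns a normalized crossed system $(H, G, \alpha', f')$ together with an isomorphism of groups $\theta : H \#_{\alpha'}^{f'} \, G \to E = H \#_{\alpha}^{f} \, G$ compatible with the identity maps on $H$ and $G$. This $\theta$ is exactly the isomorphism demanded by the statement, so we are done.

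For completeness I would also record the explicit normalized data produced by \thref{4}, as it shows how one normalizes a cocycle in practice. Writing $c := f(1,1)$ and identifying $H$ with $i_H(H) \unlhd E$, one picks a section $s : G \to E$ of $\pi_G$ with $s(1) = 1_E = (c^{-1}, 1)$ and sets $\alpha'(g)(h) = s(g)\, h\, s(g)^{-1}$ and $f'(g_1, g_2) = s(g_1)\, s(g_2)\, s(g_1 g_2)^{-1}$; the identities of \leref{forcross} ensure the outcome is normalized. Since the argument is a straight composition of two results already established, there is essentially no obstacle. The only thing that genuinely needs checking is that the input hypotheses of \thref{4} are met for a non-normalized system, i.e. that \coref{zz} really produces an exact sequence starting from an arbitrary crossed system --- and that is exactly the generality in which \coref{zz} is stated.
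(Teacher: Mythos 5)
Your proposal is correct and follows exactly the paper's own route: view $H \#_{\alpha}^{f}\, G$ as an extension of $H$ by $G$ via the exact sequence \equref{sir5} of \coref{zz}, then apply \thref{4} to produce the normalized crossed system and the isomorphism. The extra explicit description of $\alpha'$ and $f'$ via the section $s$ is a harmless (and accurate) elaboration of what \thref{4} already provides.
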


\begin{proof} It follows from the exact sequence \equref{sir5}
that $H \cong H \times \{1\} \unlhd H \#_{\alpha}^{f}G$ and
$H\#_{\alpha}^{f} G /H\times \{1\} \cong G$. Using \thref{4} we
obtain a normalized crossed system $(H, G, \alpha', f')$ such that
$H \#_{\alpha}^{f} \, G \cong H \#_{\alpha '}^{f'} \, G$.
\end{proof}

The following result is a better formulation of the Schreier
theorem \cite[Theorem 12.4]{grillet}.

\begin{corollary} Let $H$ and $G$ be two groups. The existence of the following data is equivalent:
\begin{enumerate}
\item An extension of $H$ by $G$. \item A normalized crossed
system $(H, G, \alpha, f)$. \item A crossed system $(H, G, \alpha,
f)$.
\end{enumerate}
\end{corollary}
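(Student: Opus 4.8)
The plan is to establish the three-way equivalence by proving the cycle of implications $(1) \Rightarrow (2) \Rightarrow (3) \Rightarrow (1)$, since each individual step is already contained in the results obtained above and the whole corollary is really a repackaging of them.

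First, for $(1) \Rightarrow (2)$ I would simply invoke \thref{4}: starting from an arbitrary extension $(E, i, \pi)$ of $H$ by $G$, that theorem produces a \emph{normalized} crossed system $(H, G, \alpha, f)$ (via a section $s$ of $\pi$ with $s(1)=1$, together with the isomorphism $\theta$ onto $E$). Thus the existence of an extension immediately yields the existence of a normalized crossed system. The implication $(2) \Rightarrow (3)$ is then tautological from \deref{crossedsystem}: a normalized crossed system is by definition a crossed system that additionally satisfies $f(1,1)=1$, so it is in particular a crossed system, and no computation is required.

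To close the cycle, for $(3) \Rightarrow (1)$ I would appeal to \coref{zz}: to any crossed system $(H, G, \alpha, f)$ the exact sequence \equref{sir5} associates the crossed product $\bigl(H \#_{\alpha}^{f} \, G, i_H, \pi_G\bigr)$, which is an extension of $H$ by $G$. Since $(1) \Rightarrow (2) \Rightarrow (3) \Rightarrow (1)$ is now complete, the three statements are equivalent.

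I expect no genuine obstacle here, as all the real work has been done in \thref{3}, \thref{4}, \coref{zz} and \coref{zzz}. The only point that deserves a moment's care is that the direct implication $(3) \Rightarrow (2)$ is \emph{not} tautological, since a general crossed system need not be normalized; but we never use it in isolation, because it factors as $(3) \Rightarrow (1) \Rightarrow (2)$, and in any case \coref{zzz} shows explicitly how to replace an arbitrary crossed system by an isomorphic normalized one.
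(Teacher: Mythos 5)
Your proof is correct and takes essentially the same route as the paper, which simply cites \coref{zz}, \thref{4} and \coref{zzz}; your cycle $(1)\Rightarrow(2)\Rightarrow(3)\Rightarrow(1)$ just makes the implications explicit. The remark that $(3)\Rightarrow(2)$ is not tautological but is recovered via $(3)\Rightarrow(1)\Rightarrow(2)$ (or via \coref{zzz}) is a sensible clarification, not a deviation.
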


\begin{proof}
It follows from \coref{zz}, \thref{4} and \coref{zzz}.
\end{proof}

Thus the extension problem of H\"{o}lder can be restated in a
computational manner as follows:

\emph{\textbf{Problem 1}: Let $H$ and $G$ be two fixed groups.
Describe all normalized crossed systems $(H, G, \alpha, f)$ and
classify up to isomorphism all crossed products $H \#_{\alpha}^{f}
\, G$.}

The description of all extensions of a group by a group (or,
equivalently in view of the corollary above, of all normalized
crossed systems that can be constructed for two fixed groups) has
been a central problem of group theory during the last century
(see for example \cite{adem}, \cite{brown}, \cite{hump} ). For the
second part of the problem 1 (namely the classification) no
solution is known in general. The first important result in the
literature for the first part of problem 1 was proved by
H\"{o}lder himself \cite[Theorem 12.9]{grillet}. It describes the
crossed product of two finite cyclic groups (we refer to \cite{AF}
for related results):

\begin{theorem}\thlabel{crossedciclice}
A finite group $E$ is isomorphic to a crossed product $C_n
\#_{\alpha}^{f} \, C_m$ if and only if $E$ is the group generated
by two generators $a$ and $b$ subject to the relations
$$a^n = 1, \qquad b^m = a^i, \qquad b^{-1} a b = a^j$$
where $i$, $j \in \{0, 1, \cdots , n-1 \}$ such that
$$i(j-1) \equiv 0 ({\rm mod} \, n), \qquad j^m \equiv 1 ({\rm mod}
\, n)$$
\end{theorem}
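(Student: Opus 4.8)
The plan is to prove both implications by moving between the presented group and an explicit crossed product, using an order count to promote a surjection to an isomorphism. Throughout write $C_n = \langle a \rangle$ and $C_m = \langle c \rangle$. Since $C_n$ is abelian, condition \equref{WA} forces every weak action $\alpha : C_m \to \Aut (C_n)$ to be an honest group homomorphism, so $\alpha$ is determined by an integer $t$ with $\gcd(t,n)=1$ via $c \triangleright a = a^{t}$, and the requirement $\alpha(c^m) = \Id$ is equivalent to $t^m \equiv 1 \pmod n$.

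For the forward implication ($\Rightarrow$), suppose $E \cong C_n \#_{\alpha}^{f}\, C_m$; by \coref{zzz} we may assume the crossed system is normalized. Put $a := (a,1)$ and $b := (1,c)$. Using \equref{4} together with the normalization \equref{norm2} I would first record the three relations: $(h,1)(h',1)=(hh',1)$ gives $a^n = 1$; the recursion $b^{k+1} = (1,c)\,b^{k}$ shows $b^m = (a^i,1) = a^i$ for a well-defined $i \in \{0,\dots,n-1\}$; and $b\,a\,b^{-1} = a^{t}$, hence $b^{-1}ab = a^{j}$ with $j \equiv t^{-1} \pmod n$. The two numerical constraints then fall out of the group axioms alone: since $a^i = b^m$ commutes with $b$ one gets $a^{i} = b^{-1}a^{i}b = a^{ij}$, i.e. $i(j-1)\equiv 0 \pmod n$; and conjugating $a$ by $b^{m}=a^i \in \langle a\rangle$ trivially gives $a = b^{-m}ab^{m} = a^{j^m}$, i.e. $j^m \equiv 1 \pmod n$ (here $a$ has order exactly $n$ because $h \mapsto (h,1)$ embeds $C_n$). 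Finally \equref{2.1.gen} shows $a,b$ generate $E$, so the presented group $P$ surjects onto $E$; since the relations let every element of $P$ be written as $a^{s}b^{t}$ with $0\le s<n$, $0\le t<m$, we have $|P|\le nm = |E|$, and the surjection must be an isomorphism.

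For the converse, given $P$ with the stated relations and $i(j-1)\equiv 0$, $j^m\equiv 1 \pmod n$, I would build a crossed system realizing it. Set $t := j^{-1} \bmod n$ (which exists, as $j^m\equiv 1$ makes $j$ a unit) and define $\alpha$ by $c \triangleright a = a^{t}$; this is a weak action since $t^m \equiv 1$. Define $f : C_m \times C_m \to C_n$ on representatives $c^{p},c^{q}$ (with $0\le p,q<m$) by the classical cyclic cocycle $f(c^{p},c^{q}) = a^{i}$ if $p+q\ge m$ and $f(c^p,c^q)=1$ otherwise. The point is that $a^{i}$ is fixed by $\alpha$ precisely because $i(t-1)\equiv 0$, which is equivalent to $i(j-1)\equiv 0$; granting this, verifying \equref{CC} reduces to the combinatorial identity that the number of \emph{carries} $[\,p+q\ge m\,] + [\,\overline{p+q}+r\ge m\,]$ equals $\lfloor (p+q+r)/m\rfloor$ and is therefore symmetric in the two bracketings. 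By \thref{3} the crossed product $C_n \#_{\alpha}^{f}\, C_m$ is then a group of order $nm$, and the computation of the previous paragraph (the cocycle contributing $a^i$ exactly once, at the wrap-around $c\cdot c^{m-1}$) shows that its generators $(a,1),(1,c)$ satisfy exactly the defining relations of $P$. Thus $P$ surjects onto a group of order $nm$ while $|P|\le nm$, forcing $P \cong C_n \#_{\alpha}^{f}\, C_m$.

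The main obstacle is the converse, and within it the verification of \equref{CC}: one must see that the two numerical hypotheses are exactly what is needed, the condition $j^m\equiv 1$ to make $\alpha$ a legitimate weak action and $i(j-1)\equiv 0$ to make $a^i$ an $\alpha$-invariant element so that the carry-counting identity yields \equref{CC}. A secondary nuisance, easy to get wrong, is bookkeeping the inverse in $t \equiv j^{-1}$ and checking that $i(t-1)\equiv 0$ and $i(j-1)\equiv 0$ are interchangeable; they are, since multiplying by the unit $j$ turns one into the other.
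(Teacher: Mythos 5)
Your proposal is correct, but there is nothing in the paper to compare it against: the paper states \thref{crossedciclice} as H\"older's classical result and simply cites \cite[Theorem 12.9]{grillet} (and the work in progress \cite{AF}), giving no proof of its own. Judged on its own merits, your argument is the standard one and it is complete. The key points all check out: on an abelian $H$ condition \equref{WA} does degenerate to $\alpha$ being a genuine homomorphism, so $\alpha$ is determined by a unit $t$ with $t^m\equiv 1 \pmod n$; the normalization \equref{norm2} gives $ba=a^tb$ in the crossed product, whence $b^{-1}ab=a^{t^{-1}}=a^j$ (you keep the inversion $j\equiv t^{-1}$ consistent between the two directions, which is the easiest place to slip); the two congruences $i(j-1)\equiv 0$ and $j^m\equiv 1$ fall out of $b^m=a^i$ commuting with $b$ and of $b^{-m}ab^m=a$, using that $h\mapsto (hf(1,1)^{-1},1)$ embeds $C_n$ so that $a$ really has order $n$; and the order count $|P|\le nm$ via the normal form $b^ta^s$ promotes the surjection to an isomorphism. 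For the converse, the $\alpha$-invariance of $a^i$ (equivalent to $i(j-1)\equiv 0$ after multiplying by the unit $j$) is exactly what reduces \equref{CC} for the carrying cocycle to the symmetric carry-counting identity, and the same order count finishes the proof. One cosmetic remark: when reducing words in $P$ to the form $a^sb^t$ rather than $b^ta^s$ you implicitly use that $j$ is invertible mod $n$; this is harmless since $j^m\equiv 1$ guarantees it (or one can use $b^{-1}=a^{-i}b^{m-1}$ to avoid it), but it is worth saying explicitly.
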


The following argument indicates the crucial importance of the
previous problem: we shall prove that any finite group is
isomorphic to a finite product of normalized crossed products of
finite simple groups. Thus we can survey all finite groups as if
we are able to compute various crossed systems starting with
finite simple groups or crossed products of them. The first part
of the following theorem can be found in \cite[pages
283-284]{rotman2} using the equivalently language of extensions of
a group by a group. Here we present a different proof.

\begin{theorem}\thlabel{2.2.9}
Any finite group is isomorphic to an iteration of normalized
crossed products of finite simple groups.
\newline Any abelian finite group is isomorphic to an iteration of normalized
twisted products of various $\ZZ_{p_i}$, where $p_i$ are prime
numbers.
\end{theorem}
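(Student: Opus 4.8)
The plan is to induct on the order of the group, at each step peeling off one simple composition factor from the top and realizing the resulting extension as a crossed product by means of \thref{4}. For the first assertion, let $E$ be a finite group with $|E| > 1$. If $E$ is simple there is nothing to do. Otherwise I would choose, among the proper normal subgroups of $E$ (a nonempty finite poset, since it contains $\{1\}$), a maximal one $H \unlhd E$; by the correspondence theorem the quotient $E/H$ has no proper nontrivial normal subgroups, so $G := E/H$ is a finite simple group. Then $(E, i, \pi)$, with $i : H \to E$ the inclusion and $\pi : E \to G$ the canonical projection, is an extension of $H$ by $G$, and \thref{4} provides a normalized crossed system $(H, G, \alpha, f)$ together with an isomorphism $E \cong H \#_{\alpha}^{f} \, G$ in which $G$ is simple. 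Since $H$ is proper we have $|H| < |E|$, so the induction hypothesis writes $H$ itself as an iteration of normalized crossed products of finite simple groups; substituting this into the isomorphism above completes the inductive step.

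For the second assertion I would run the same induction while tracking the extra structure that commutativity supplies. Let $A$ be a nontrivial finite abelian group and pick a maximal proper subgroup $H$; as $A$ is abelian, $H$ is automatically normal and $A/H$ is a simple abelian group, hence $A/H \cong \ZZ_p$ for some prime $p$. Applying \thref{4} to $1 \to H \to A \to \ZZ_p \to 1$ yields a normalized crossed system $(H, \ZZ_p, \alpha, f)$ with $\alpha(g)(h) = s(g)\, h\, s(g)^{-1}$ and $f(g_1, g_2) = s(g_1) s(g_2) s(g_1 g_2)^{-1}$. This is the point where abelianness does the work: because $A$ is commutative the conjugation defining $\alpha$ is trivial, so $\alpha$ is the trivial action; and because the subgroup $H$ is itself abelian we have $Z(H) = H$, so the cocycle $f$, which takes values in $\Ker \pi = H$, automatically satisfies $\im(f) \subseteq Z(H)$. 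Under these hypotheses the cocycle conditions collapse, since \equref{CC} with trivial action is exactly the $2$-cocycle identity \equref{CCpropiu}, and the multiplication \equref{4} reduces to \equref{tw4}; hence $A \cong H \times^{f} \, \ZZ_p$ is a normalized twisted product. As $|H| < |A|$, the induction hypothesis expresses $H$ as an iteration of normalized twisted products of various $\ZZ_{p_i}$, and the base case $A \cong \ZZ_p$ is immediate.

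The argument is a routine combination of the Jordan--H\"older philosophy with the reconstruction result \thref{4}, so I do not anticipate a serious obstacle; the only step demanding genuine care is the abelian refinement, where one must verify that the crossed system manufactured by \thref{4} is really a \emph{twisted} product and not merely a general crossed product. This rests on two separate uses of commutativity — of $A$, to kill the action $\alpha$, and of $H$, to force the cocycle into the center — together with the observation that \thref{4} is built from a section $s$ with $s(1) = 1$, which guarantees $f(1,1) = 1$ and thus keeps the system normalized at every stage, so that the descriptor \emph{normalized twisted product} is preserved along the whole iteration.
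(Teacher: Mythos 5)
Your proof is correct and follows essentially the same route as the paper: induction on $|E|$, using \thref{4} to realize the extension $1 \to H \to E \to E/H \to 1$ as a normalized crossed product, and observing in the abelian case that the conjugation action \equref{8} is trivial so the crossed product degenerates to a twisted product. The only (harmless, and arguably cleaner) difference is that you take $H$ to be a \emph{maximal} normal subgroup so that $E/H$ is simple and induction is applied only to $H$, whereas the paper takes an arbitrary proper nontrivial normal subgroup and applies the induction hypothesis to both $H$ and $E/H$.
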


\begin{proof}
Let $E$ be a finite group of order $n$. We will prove by induction
on $n$. If $n=2$ then $E\cong \ZZ_2$ and we are done. Assume that
$n > 2$. If $E$ is simple there is nothing to prove. Assume that
$E$ has a proper normal subgroup $\{1\}\neq H \lhd E$. It follows
from \thref{4} that there exists a normalized crossed system $ (H,
E/H, \alpha, f)$ such that $E\cong H \#_{\alpha}^{f} \, E/H$. As
$H$ and $E/H$ have order $< n$ we apply the induction. The abelian
case follows similarly if we apply \thref{4} and the fact that an
abelian simple group is isomorphic to $\ZZ_p$, for a prime number
$p$: in this case \equref{8} shows that in the abelian case the
action $\alpha$ that arises is trivial, i.e. the crossed product
between $H$ and $E/H$ is reduced to the twisted product $E\cong H
\times^{f} \, E/H$.
\end{proof}

The examples below show how the \thref{2.2.9} is applied.

\begin{examples}
1. Let $f: C_2 \times C_2 \rightarrow C_2$ given by $f(1,1) =
f(1,a) = f(a,1) = 1$, $f(a,a) = a$. Then $f$ is a $2$-cocycle and
$C_4\cong C_2 \times^{f} \, C_2 $, the twisted product of $C_2$
and $C_2$.

2. Let
$$
\alpha : C_2 \rightarrow \Aut (C_4), \qquad \alpha (1)= {\rm
Id}_{C_4}, \quad \alpha (a) (x) = x^{-1}
$$
for all $x\in C_4$ and
$$f: C_2 \times C_2 \rightarrow C_4, \qquad
f(1,1) = f(1,a) = f(a,1) = 1, \quad f(a,a) = b
$$
where $b$ is a generator of $C_4$. Then $(C_4, C_2, \alpha, f)$ is
a normalized crossed system and $C_4 \#_{\alpha}^{f} \, C_2 \cong
Q$, the quaternion group $Q$ of order $8$ . Thus the quaternion
group $Q$ can be presented as
$$
Q \cong (C_2 \times^{f_1} \, C_2) \#_{\alpha}^{f_2} \, C_2
$$
for a $2$-cocycle $f_1$, a weak action $\alpha$ and a
$\alpha$-cocycle $f_2$.
\end{examples}

\thref{2.2.9} leads naturally to the question of associativity of
crossed products.

\emph{\textbf{Problem 2}: Let $H$, $G$ and $K$ be three groups and
$(H, G, \alpha, f)$ and $(H \#_{\alpha}^{f} \, G, K, \beta, g )$
two crossed systems. Under what circumstances do two crossed
systems $(G, K, \beta', g')$ and $(H, G \#_{\beta'}^{g'} \, K,
\alpha', f')$ and an isomorphism of groups
$$
\bigl( H \#_{\alpha}^{f} \, G \bigl) \#_{\beta}^{g} \, K \cong H
\#_{\alpha '}^{f' } \, \bigl (G \#_{\beta'}^{g'} \, K \bigl) \,
exist ?
$$}
\subsection{Commutativity of crossed product}
The results presented here are the counterpart at the level of
groups of some theorems proved recently in \cite{os} for crossed
products of group actions on rings.

We shall fix $\Gamma = (H, G, \alpha, f)$, a normalized crossed
system of groups. We define the group of invariants of the weak
action $\alpha$ as follows
$$
H^G := \{h \in H \, | \, g \rhd h = h, \, \forall g\in G \}
$$
Then $H^G$ is a subgroup of $H$ called the \emph{subgroup of
invariants} of the crossed system and hence we have the following
extension of groups that is associated to a crossed system $(H, G,
\alpha, f)$:
$$
H^G \leq H \cong H \times \{1 \} \unlhd H \#_{\alpha}^{f} \, G
$$

\emph{\textbf{Problem 3}: Let $(H, G, \alpha, f)$ be a normalized
crossed system. Give a necessary and sufficient condition for the
category of representations of H $\#_{\alpha}^{f} \, G$ to be
equivalently to the category of representations of $H^G$. }

We compute now the center of a crossed product:

\begin{proposition}\prlabel{centrudec}
Let $(H, G, \alpha, f)$ be a normalized crossed system. Then $ (h,
g) \in Z ( H \#_{\alpha}^{f} \, G )$ if and only if the following
three conditions hold
\begin{eqnarray}
g\rhd h' &=& h^{-1} h' h, \quad g \in  Z(G)   \eqlabel{centru2} \\
(g' \rhd h) f(g', g) &=& h f (g, g') \eqlabel{centru3}
\end{eqnarray}
for any $h' \in H$ and $g'\in G$.
\end{proposition}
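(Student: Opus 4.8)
The plan is to exploit the fact, recorded around \equref{2.1.gen}, that $(H \times \{1\}) \cup (\{1\} \times G)$ is a generating set of $H \#_{\alpha}^{f} \, G$. Since an element lies in the center precisely when it commutes with everything, and since the centralizer of a generating set coincides with the center of the whole group, $(h, g)$ is central if and only if it commutes simultaneously with every generator $(h', 1)$, $h' \in H$, and every generator $(1, g')$, $g' \in G$. I would therefore reduce the whole statement to these two families of commutation relations and then read off the three displayed conditions one by one. Because each reduction is an equivalence, this will establish both directions at once.

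First I would test commutation with the elements $(h', 1)$. Using the multiplication \equref{4} together with the normalization identities \equref{norm2} (namely $f(1, g) = f(g, 1) = 1$ and $1 \triangleright h = h$), a direct computation gives $(h, g) \cdot (h', 1) = \bigl( h (g \triangleright h'), g \bigl)$ and $(h', 1) \cdot (h, g) = (h' h, g)$. Equating the first components for all $h' \in H$ yields $h (g \triangleright h') = h' h$, equivalently $g \triangleright h' = h^{-1} h' h$, which is the first half of \equref{centru2}; the second components agree automatically.

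Next I would test commutation with the elements $(1, g')$. Again from \equref{4} and \equref{norm2}, using $g \triangleright 1 = 1$, one finds $(h, g) \cdot (1, g') = \bigl( h f(g, g'), g g' \bigl)$ and $(1, g') \cdot (h, g) = \bigl( (g' \triangleright h) f(g', g), g' g \bigl)$. Equality of the second components for all $g' \in G$ forces $g g' = g' g$, i.e. $g \in Z(G)$, which completes \equref{centru2}; equality of the first components is exactly $h f(g, g') = (g' \triangleright h) f(g', g)$, which is \equref{centru3}.

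There is no serious obstacle here; the argument is essentially bookkeeping with \equref{4}. The only point requiring a little care is the justification that checking these two families of generators suffices, which is where I would invoke \equref{2.1.gen} and the elementary fact that an element commuting with a generating set is central. One should also make sure the normalization hypothesis is genuinely used: it is what makes the terms $f(1, g)$ and $f(g, 1)$ disappear and keeps the intermediate expressions clean, and it is what makes $(1,1)$ rather than $(f(1,1)^{-1},1)$ the reference point in these computations.
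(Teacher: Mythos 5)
Your proposal is correct and follows essentially the same route as the paper: reduce centrality to commutation with the generating set $(H\times\{1\})\cup(\{1\}\times G)$ from \equref{2.1.gen}, then compute both products using \equref{4} and the normalization identities \equref{norm2}. Your explicit computations match what the paper leaves as ``a direct computation,'' so there is nothing to add.
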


\begin{proof}
It follows from \equref{2.1.gen} that $(H\times \{1\}) \cup (\{1\}
\times G)$ is a set of generators for $H \#_{\alpha}^{f} \, G$.
Thus, $ (h, g) \in Z ( H \#_{\alpha}^{f} \, G )$ if and only if $
(h, g)$ commutes with $(h', 1)$ and $ (1, g')$ for all $h' \in H$
and $g'\in G$. A direct computation shows that $(h, g) \cdot (1,
g') = (1, g') \cdot (h, g)$ for all $g'\in G$ if and only if $g\in
Z(G)$ and \equref{centru3} holds. Similarly, using \equref{norm2}
we can show that $(h, g) \cdot (h', 1) = (h', 1) \cdot (h, g)$ for
all $h'\in H$ if and only if the first condition in
\equref{centru2} holds.
\end{proof}

The next corollary gives the center of a twisted product:

\begin{corollary} \colabel{centrutw}
Let $H$ and $G$ be two groups, $f : G\times G \rightarrow Z(H)$ a
normalized $2$-cocycle and $H\times^{f} \, G$ the twisted product
of $H$ and $G$. Then
$$
Z ( H\times^{f} \, G ) = \{ (h, g) \in Z(H) \times Z (G) \, | \,
f( -, g) = f (g, -) \}
$$
In particular, a twisted product $H\times^{f} \, G$ is an abelian
group if and only if $H$ and $G$ are abelian groups and $f$ is a
symmetric 2-cocycle.
\end{corollary}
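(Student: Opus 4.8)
The plan is to deduce everything directly from \prref{centrudec}, specialized to the twisted product. Recall that by Example~3 the twisted product $H \times^{f} \, G$ is precisely the crossed product $H \#_{\alpha}^{f} \, G$ associated to the crossed system in which $\alpha$ is the \emph{trivial} action, so that $g \rhd h = h$ for all $g \in G$ and $h \in H$, and in which $\im (f) \subseteq Z(H)$. Thus I can invoke the characterization of central elements from \prref{centrudec} and simply substitute $g \rhd h = h$ wherever the weak action appears.

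First I would specialize condition \equref{centru2}. With the trivial action the equality $g \rhd h' = h^{-1} h' h$ becomes $h' = h^{-1} h' h$, required for every $h' \in H$; this says precisely that $h$ commutes with all of $H$, i.e. $h \in Z(H)$. The second half of \equref{centru2}, namely $g \in Z(G)$, survives unchanged. Next I would specialize \equref{centru3}: since $g' \rhd h = h$, the identity $(g' \rhd h) f(g', g) = h f(g, g')$ collapses to $h\, f(g', g) = h\, f(g, g')$, and left-cancelling $h$ in the group $H$ yields $f(g', g) = f(g, g')$ for every $g' \in G$, that is, $f(-, g) = f(g, -)$. Collecting these three conditions gives exactly the claimed description of $Z(H \times^{f} \, G)$.

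For the \emph{in particular} statement I would use that a group is abelian if and only if it coincides with its own center. If $H \times^{f} \, G$ is abelian, then every pair $(h, g)$ lies in the center, so the first condition forces every $h \in H$ to satisfy $h \in Z(H)$, whence $H = Z(H)$ is abelian; likewise $G = Z(G)$ is abelian; and the condition $f(-, g) = f(g, -)$, now imposed for \emph{all} $g$, says exactly that $f$ is symmetric. The converse is immediate: if $H$ and $G$ are abelian and $f$ is symmetric, then all three conditions hold for every $(h, g)$, so the entire group is central.

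There is essentially no hard step: the whole argument is a mechanical substitution of the trivial action into \prref{centrudec}. The only points requiring a moment's attention are the left-cancellation of $h$ in the specialization of \equref{centru3} — legitimate because $Z(H \times^{f} \, G) \subseteq H \times^{f} \, G$ lives in a group — and the observation that the pointwise symmetry $f(-, g) = f(g, -)$ upgrades to full symmetry of $f$ precisely when it is demanded for all $g$, which is what the abelian case requires.
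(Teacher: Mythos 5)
Your proof is correct and is exactly the argument the paper intends: the corollary is stated immediately after \prref{centrudec} with no separate proof precisely because it is the specialization of that proposition to the trivial action, which is what you carry out. The substitutions, the cancellation of $h$, and the reduction of the abelian case to ``center equals the whole group'' are all sound.
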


Using the subgroup $H^G$ of invariants we give a description of
the center of a crossed product having a symmetric cocycle:

\begin{corollary} Let $(H, G, \alpha, f)$ be a normalized crossed
system such that $f$ is a symmetric $\alpha$-cocycle. Then
$$
Z ( H \#_{\alpha}^{f} \, G ) = \{ (h, g) \in H^G \times Z (G) \, |
\,  g \rhd h' = h^{-1} h' h, \, \forall \, h'\in H \}
$$
\end{corollary}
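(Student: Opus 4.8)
The plan is to deduce this directly from \prref{centrudec}, using the symmetry of $f$ to collapse condition \equref{centru3} into the single statement $h \in H^G$. First I would recall the characterization from \prref{centrudec}: an element $(h, g)$ lies in $Z(H \#_{\alpha}^{f} \, G)$ precisely when $g \in Z(G)$, when $g \rhd h' = h^{-1} h' h$ for all $h' \in H$ (the content of \equref{centru2}), and when $(g' \rhd h) f(g', g) = h f(g, g')$ for all $g' \in G$ (the content of \equref{centru3}). The whole task is to see how the symmetry hypothesis transforms the last of these.

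Next I would impose symmetry. Since $f$ is a symmetric $\alpha$-cocycle we have $f(g', g) = f(g, g')$ for every $g' \in G$, so the two factors involving $f$ on the two sides of \equref{centru3} coincide. Because $f(g', g)$ is an element of the group $H$, it is invertible, and I may cancel it on the right of both sides; this turns \equref{centru3} into $g' \rhd h = h$ for all $g' \in G$, which is exactly the assertion that $h \in H^G$.

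Finally I would assemble the pieces. The conditions coming from \equref{centru2} give $g \in Z(G)$ together with $g \rhd h' = h^{-1} h' h$ for all $h' \in H$, while the simplified \equref{centru3} gives $h \in H^G$. The conjunction of these is precisely membership in the set $\{ (h, g) \in H^G \times Z(G) \, | \, g \rhd h' = h^{-1} h' h, \, \forall \, h' \in H \}$. For the converse inclusion I would check that any such pair satisfies all three conditions: here $h \in H^G$ forces $g' \rhd h = h$, so the left side of \equref{centru3} becomes $h f(g', g)$, and the symmetry $f(g', g) = f(g, g')$ shows this equals $h f(g, g')$, the right side. This establishes the claimed equality.

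The only point needing a word of care — and it is not really an obstacle — is the legitimacy of the cancellation: it rests on $f$ taking values in the group $H$, so that each $f(g', g)$ is invertible, and on the symmetry holding for every pair $(g', g)$ with $g$ fixed and $g'$ ranging over all of $G$. There is no cohomological subtlety involved; the corollary is a formal simplification of \prref{centrudec} under the symmetric-cocycle hypothesis.
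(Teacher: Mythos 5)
Your proposal is correct and matches the intended argument: the paper states this corollary without proof precisely because it follows immediately from \prref{centrudec} by the cancellation you describe, namely that symmetry of $f$ turns \equref{centru3} into $(g' \rhd h) f(g', g) = h f(g', g)$, whence $g' \rhd h = h$ for all $g'$, i.e.\ $h \in H^G$. Nothing further is needed.
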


The next result gives a necessary and sufficient condition for a
crossed product to be an abelian group.

\begin{corollary}\colabel{crocicl}
Let $(H, G, \alpha, f)$ be a normalized crossed system. Then $H
\#_{\alpha}^{f} \, G$ is an abelian group if and only if $H$ and
$G$ are abelian groups, $\alpha$ is the trivial action and $f$ is
a symmetric 2-cocycle.
\end{corollary}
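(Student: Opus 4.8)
The plan is to read this corollary directly off the center computation in \prref{centrudec}. A group is abelian precisely when it equals its own center, so $H \#_{\alpha}^{f} \, G$ is abelian if and only if \emph{every} element $(h, g)$ belongs to $Z(H \#_{\alpha}^{f} \, G)$. I would therefore feed suitably chosen elements into the characterization \equref{centru2}--\equref{centru3} and read off the four asserted conditions, taking care to extract them in an order that avoids circularity.

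For the forward implication, suppose every $(h, g)$ is central. Testing the element $(1, g)$: condition \equref{centru2} forces $g \in Z(G)$ for every $g$, whence $G$ is abelian, and its first part reads $g \rhd h' = 1^{-1} h' 1 = h'$ for all $h'$, so $\alpha(g) = \Id$ for every $g$, i.e. $\alpha$ is trivial. Next, testing $(h, 1)$ and using normalization \equref{norm2} (so that $1 \rhd h' = h'$), the first part of \equref{centru2} becomes $h' = h^{-1} h' h$ for all $h'$; letting $h$ range over $H$ gives $H = Z(H)$, so $H$ is abelian. Finally, with $\alpha$ trivial the term $g' \rhd h$ in \equref{centru3} equals $h$, so that condition collapses to $f(g', g) = f(g, g')$, i.e. $f$ is symmetric; moreover, since $\alpha$ is trivial and $H$ is abelian the general cocycle identity \equref{CC} reduces to \equref{CCpropiu}, so $f$ is an honest $2$-cocycle.

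The converse is immediate: if $H$ and $G$ are abelian, $\alpha$ is trivial and $f$ is a symmetric $2$-cocycle, then $\im(f) \subseteq H = Z(H)$ and we are exactly in the twisted-product situation $H \times^{f} \, G$ (see \equref{tw4}); \coref{centrutw} then states precisely that such a twisted product is abelian, and one may also verify commutativity of \equref{tw4} by a direct one-line computation. The only real point of care is the ordering in the forward direction: one should deduce ``$\alpha$ trivial'' from the element $(1, g)$ and ``$H$ abelian'' from $(h, 1)$ separately, rather than trying to untangle both consequences of \equref{centru2} at once; once these are in hand, symmetry of $f$ drops straight out of \equref{centru3} with no further work, so I do not expect any genuine obstacle beyond this bookkeeping.
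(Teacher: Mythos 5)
Your proof is correct and follows essentially the same route as the paper: the forward direction is read off from the center characterization in \prref{centrudec} and the converse is delegated to \coref{centrutw}. The only cosmetic difference is that you extract the abelianness of $H$ and $G$ from \prref{centrudec} as well (via the elements $(h,1)$ and $(1,g)$), whereas the paper simply notes it directly; both are fine.
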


\begin{proof} Assume that $H \#_{\alpha}^{f} \, G$ is an abelian
group. Then $H$ and $G$ are abelian groups. Using \equref{centru2}
of \prref{centrudec} we obtain that $\alpha$ is the trivial action
and hence \equref{centru3} shows that $f$ is symmetric. The
converse follows from \coref{centrutw}.
\end{proof}

\begin{remark}
Let $(H, G, \alpha, f)$ be a normalized crossed system. Then the
centralizer of $H \cong H\times \{1 \}$ in $H \#_{\alpha}^{f} \,
G$ is given by
$$
C_{H \#_{\alpha}^{f} \, G} (H) =  \{ (h,g) \, | \, g \rhd x =
h^{-1} x h, \forall x \in H \}.
$$
If $H$ is abelian then $\alpha$ is a morphism of groups and $C_{H
\#_{\alpha}^{f} \, G} (H) = H \times {\rm Ker} (\alpha) $.
Moreover, if $G$ is also abelian and $f$ is a symmetric
$\alpha$-cocycle then $C_{H \#_{\alpha}^{f} \, G} (H)$ is an
abelian group. Indeed, for $(h, g)$, $(h', g') \in C_{H
\#_{\alpha}^{f} \, G} (H)$ we have:
\begin{eqnarray*}
(h, g) \cdot (h', g') &=& \bigl( h (g\rhd h') f (g, g'), gg'
\bigl) \\
&=& \bigl(h h^{-1} h' h f (g, g'), gg' \bigl)\\
&=& \bigl( h' h f (g, g'), gg' \bigl) = (h', g') \cdot (h, g)
\end{eqnarray*}
\end{remark}

\section{Crossed product as initial and final object. Applications}\selabel{2}
In this section $\Gamma = (H, G, \alpha, f)$ will be a normalized
crossed system of groups. We shall prove that the crossed product
$H \#_{\alpha}^{f} \, G$ is determined by a universal property in
two ways: it can be viewed at the same time as an initial object
in a certain category $\cat$ and as a final object into another
category $\Cat$. Define the category $\cat$ as follows: the
objects in $\cat$ are pairs $(X, (u,v))$ where $X$ is a group, $u:
H \rightarrow X$ is a morphism of groups, $v: G \rightarrow X$ is
a map such that the following two compatibility conditions hold:
\begin{equation}\eqlabel{2.1.14}
v(g_1)v(g_2) = u (f(g_1, g_2)) v (g_1g_2), \qquad v(g) u(h) =
u(g\triangleright h) v(g)
\end{equation}
for all $g$, $g_1$, $g_2 \in G$ and $h\in H$. A morphism $ f: (X,
(u,v)) \rightarrow (X', (u',v'))$ in $\cat$ is a morphism of
groups $f: X \rightarrow X'$ such that $f \circ u = u'$ and
$f\circ v = v'$. It can be checked that $\bigl( H \#_{\alpha}^{f}
\, G, \, (i_H, i_G) \bigl)$ is an object of $\cat$, where $i_H$
and $i_G$ are the canonical inclusions $i_H : H \rightarrow H
\#_{\alpha}^{f} \, G$, $i_H (h) = (h, 1)$ and $i_G : G \rightarrow
H \#_{\alpha}^{f} \, G$, $i_G (g) = (1, g)$ for all $h\in H$ and
$g\in G$.

Define the category $\Cat$ as follows: the objects in $\Cat$ are
pairs $(X, (u,v))$ where $X$ is a group, $v: X \rightarrow G$ is a
morphism of groups, $u: X \rightarrow H$ is a map such that the
following compatibility condition holds:
\begin{equation}\eqlabel{2.1.16}
u(xy) = u (x) \bigl( v (x)\triangleright u(y) \bigl) f \bigl(v(x),
v(y) \bigl)
\end{equation}
for all $x$, $y \in X$. A morphism $ f: (X, (u,v)) \rightarrow
(X', (u',v'))$ in $\Cat$ is a morphism of groups $f: X \rightarrow
X'$ such that $u' \circ f = u$ and $v'\circ f = v$. It can be
checked that $\bigl( H \#_{\alpha}^{f} \, G, \, (\pi_H, \pi_G)
\bigl)$ is an object of $\Cat$ where $\pi_H$ and $\pi_G$ are the
canonical projections $\pi_H : H \#_{\alpha}^{f} \, G \rightarrow
H$, $\pi_H (h, g) = h$ and $\pi_G : H \#_{\alpha}^{f} \, G
\rightarrow G$, $\pi_G (h, g) = g$ for all $h\in H$ and $g\in G$.

\begin{theorem}\thlabel{2.1.8}
Let $\Gamma = (H, G, \alpha, f)$ be a normalized crossed system of
groups. Then:
\begin{enumerate}
\item $\bigl( H \#_{\alpha}^{f} \, G, \, (i_H, i_G) \bigl)$ is an
initial object of $\cat$.

\item $\bigl( H \#_{\alpha}^{f} \, G, \, (\pi_H, \pi_G) \bigl)$ is
a final object of $\Cat$.
\end{enumerate}
\end{theorem}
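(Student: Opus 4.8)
The plan is to treat the two statements separately, in each case exploiting the fact that the relevant universal morphism is forced---on generators in the first case, on components in the second---so that \emph{uniqueness is automatic and only well-definedness requires work}.

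For part (1), I would first record that any object $(X,(u,v))$ of $\cat$ satisfies $v(1)=1$: setting $g_1=g_2=1$ in the first relation of \equref{2.1.14} and using $u(f(1,1))=u(1)=1$ (normalization) gives $v(1)v(1)=v(1)$, hence $v(1)=1$. Since by \equref{2.1.gen} every element of $H \#_{\alpha}^{f}\,G$ factors as $(h,g)=(h,1)(1,g)=i_H(h)\,i_G(g)$, any morphism $\phi$ in $\cat$ with domain $\bigl(H \#_{\alpha}^{f}\,G,(i_H,i_G)\bigr)$ must satisfy $\phi(h,g)=\phi(i_H(h))\phi(i_G(g))=u(h)v(g)$; this pins $\phi$ down uniquely. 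It then remains only to check that the \emph{map} $\phi(h,g):=u(h)v(g)$ is a group homomorphism into $X$. This is where the two compatibility conditions enter, and is the main (though routine) computation: expanding $\phi$ on a product, using that $u$ is a group map, and substituting the first relation of \equref{2.1.14} in the form $u(f(g_1,g_2))v(g_1g_2)=v(g_1)v(g_2)$ together with the second in the form $u(g_1\triangleright h_2)v(g_1)=v(g_1)u(h_2)$, I expect
\begin{eqnarray*}
\phi\bigl((h_1,g_1)(h_2,g_2)\bigr)
&=& u\bigl(h_1(g_1\triangleright h_2)f(g_1,g_2)\bigr)v(g_1g_2)\\
&=& u(h_1)\,u(g_1\triangleright h_2)\,v(g_1)v(g_2)\\
&=& u(h_1)v(g_1)\,u(h_2)v(g_2) = \phi(h_1,g_1)\phi(h_2,g_2).
\end{eqnarray*}
Together with $\phi\circ i_H=u$ and $\phi\circ i_G=v$ (immediate from $v(1)=1$ and $u(1)=1$), this establishes initiality.

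For part (2) the projections make uniqueness even more transparent: any morphism $\psi$ in $\Cat$ with codomain $\bigl(H \#_{\alpha}^{f}\,G,(\pi_H,\pi_G)\bigr)$ must obey $\pi_H\circ\psi=u$ and $\pi_G\circ\psi=v$, which forces $\psi(x)=(u(x),v(x))$ for all $x\in X$. The only thing to verify is that this assignment is a homomorphism, and here the single axiom \equref{2.1.16} is designed exactly for that purpose: since $v$ is a group map, $v(xy)=v(x)v(y)$, while \equref{2.1.16} gives $u(xy)=u(x)\bigl(v(x)\triangleright u(y)\bigr)f(v(x),v(y))$, so comparing with the multiplication \equref{4} yields $\psi(xy)=(u(xy),v(xy))=(u(x),v(x))\cdot(u(y),v(y))=\psi(x)\psi(y)$. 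A one-line specialization of \equref{2.1.16} at $x=y=1$, using $1\triangleright u(1)=u(1)$ and $f(1,1)=1$ from \equref{norm2}, gives $u(1)=1$, so $\psi$ sends the unit of $X$ to $(1,1)$, the identity of the crossed product. This proves finality.

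I do not anticipate a genuine obstacle: both halves reduce to checking that a map dictated on generators (respectively on components) respects the group law, and the compatibility conditions \equref{2.1.14} and \equref{2.1.16} have precisely the shape needed to convert the crossed-product multiplication \equref{4} into the relevant target multiplication. The only points requiring a moment's care are the normalization facts $v(1)=1$ and $u(1)=1$, which are needed both so that the candidate morphisms preserve identities and so that the relations $\phi\circ i_H=u$, $\pi_H\circ\psi=u$ hold on the nose.
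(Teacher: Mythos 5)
Your proposal is correct and follows essentially the same route as the paper: uniqueness is forced by the factorization $(h,g)=i_H(h)\,i_G(g)$ (resp.\ by the projections), and existence reduces to the multiplicativity computations using \equref{2.1.14} and \equref{2.1.16}. The only difference is that you spell out the normalization facts $v(1)=1$ and $u(1)=1$ needed for the diagrams to commute, a verification the paper explicitly leaves to the reader.
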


\begin{proof}
1. Let $(X, (u,v))\in \cat$. We have to prove that there exists a
unique morphism of groups $w:  H \#_{\alpha}^{f} \, G \rightarrow
X$ such that the following diagram commutes:
$$\begin{diagram}
  &          &H \#_{\alpha}^{f} \, G    &          &  \\
  &\NE^{i_H} &               &\NW^{i_G} &  \\
H &          & \dTo_{w}      &          & G\\
  &\SE_{u}   &               &\SW_{v}   &  \\
  &          &  X            &          &
\end{diagram}$$

Assume that $w$ satisfies this condition. Then
\begin{eqnarray*}
w ((h, g))  &=& w ((h, 1) \cdot (1, g))= w ((h, 1))
w((1, g))\\
&=& (w \circ i_H) (h) (w \circ i_G)(g) = u (h) v(g)
\end{eqnarray*}
for all $h\in H$ and $g\in G$ and this proves that $w$ is unique.
The existence of $w$ can be proved as follows: define
$$
w : H \#_{\alpha}^{f} \, G \rightarrow X, \qquad w (h, g) = u(h)
v(g)
$$
Then
\begin{eqnarray*}
&&\hspace*{-2cm} w\bigl( (h_1, g_1)\cdot (h_2, g_2) \bigl ) = w
\bigl ( h_1 (g_1
\triangleright h_2)f(g_1, g_2), \, g_1g_2 \bigl )\\
&=& u(h_1) u(g_1\triangleright h_2) u\bigl(f(g_1, g_2)\bigl) v (g_1g_2)\\
&\stackrel{\equref{2.1.14} } {=}& u(h_1)u(g_1 \rhd h_2)
v(g_1)v(g_2) \\
&\stackrel{\equref{2.1.14} } {=}& u(h_1)v(g_1)u(h_2)v(g_2) =
w(h_1, g_1) w (h_2, g_2)
\end{eqnarray*}
i.e. $w$ is a morphism of groups. The fact that the diagram above
commutes is left to the reader.

2.  Let $(X, (u,v))\in \Cat$. We have to prove that there exists a
unique morphism of groups $w: X \rightarrow H \#_{\alpha}^{f} \, G
$ such that the following diagram commutes:
$$\begin{diagram}
  &          &X  &          &  \\
  &\ldTo^{u} &               &\rdTo^{v} &  \\
H &          & \dTo_{w}      &          & G\\
  &\luTo_{\pi_H}   &               &\ruTo_{\pi_G}   &  \\
  &          &H \#_{\alpha}^{f} \, G            &          &
\end{diagram}$$
Assume that $w$ satisfies this condition; then the commutativity
of the diagrams gives that $w(x) = (u(x), v(x))$ for all $x\in X$,
i.e. $w$ is unique. For the existence of $w$ we define $w : X
\rightarrow H \#_{\alpha}^{f} \, G$, $w(x) = (u(x), v(x))$ for all
$x\in X$. We have:
\begin{eqnarray*}
&&\hspace*{-2cm} w(x) w(y) = \bigl ( u(x), v(x) \bigl)\cdot \bigl
(u(y), v(y)\bigl ) \\
&=& \Bigl( u(x) \bigl(v(x)\triangleright u(y)\bigl) f(v(x), v(y)), \, v(x)v(y) \Bigl)\\
&\stackrel{\equref{2.1.16} } {=}& (u(xy), v(xy)) = w(xy)
\end{eqnarray*}
i.e. $w$ is a morphism of groups, as needed.
\end{proof}

The first application of \thref{2.1.8} is the description of the
(iso)morphism between a crossed product and a group.

\begin{corollary}\colabel{morfisme1}
Let $X$ be a group and $(H, G, \alpha, f)$ be a normalized crossed
system of groups. Then:
\begin{enumerate}
\item A map $w : H \#_{\alpha}^{f} \, G \rightarrow X$ is a
morphism of groups if and only if there exists a pair $(u,v)$,
where $u : H \rightarrow X$ is a morphism of groups, $v : G
\rightarrow X$ is a map such that
\begin{eqnarray}
v(g_1)v(g_2) &=& u (f(g_1, g_2)) v (g_1g_2), \eqlabel{2.1.14'}\\
v(g) u(h) &=& u(g\triangleright h) v(g),  \eqlabel{2.1.14'2}\\
w (h, g) &=& u(h)v(g)  \eqlabel{2.1.14'3}
\end{eqnarray}
for all $g$, $g_1$, $g_2 \in G$ and $h\in H$.

\item A map $\psi : X \rightarrow H \#_{\alpha}^{f} \, G$ is a
morphism of groups if and only if there exists a pair $(u,v)$,
where $u : X \rightarrow H$ is a map, $v : X \rightarrow G$ a
morphism of groups such that
\begin{eqnarray}
u(xy) &=& u (x) \bigl( v(x)\triangleright u(y) \bigl) f
\bigl(v(x), v(y) \bigl) \eqlabel{2.1.16'} \\
\psi (x) &=& \bigl(u(x), v(x) \bigl)  \eqlabel{2.1.16''}
\end{eqnarray}
for all $x$, $y \in X$.

\item $w : H \#_{\alpha}^{f} \, G \rightarrow X$ given by
\equref{2.1.14'3} is an isomorphism of groups if and only if there
exists a pair $(r, s)$, where $r: X\rightarrow G$ is a morphism of
groups and a retraction of $v$ (i.e. $r\circ v = Id_G$), $s:
X\rightarrow H$ is a map that is a retraction of $u$ (i.e. $s\circ
u = Id_H$) such that:
\begin{eqnarray*}
s(x y) &=& s(x)\bigl(r(x)\triangleright
s(y)\bigl)f\bigl(r(x),r(y)\bigl), \quad u\bigl(s(x)\bigl) v\bigl(r(x)\bigl) = x, \\
r(u(h)) &=& 1, \quad s(v(g))= 1
\end{eqnarray*}
for all $x \in X$, $h \in H$ and $g \in G$.

\item $\psi : X \rightarrow H \#_{\alpha}^{f} \, G$ given by
\equref{2.1.16''} is an isomorphism of groups if and only if there
exists a pair $(r,s)$, where $r:H\rightarrow X$ is a morphism of
groups and a section of $u$ (i.e. $u\circ r = Id_H$),
$s:G\rightarrow X$ is a map that is a section of $v$ (i.e. $v\circ
s = Id_G$) such that:
\begin{eqnarray*}
s(g_{1})s(g_{2}) &=&r \bigl(f(g_{1},g_{2})\bigl)s(g_{1}g_{2}),
\qquad s(g)r(h)=r(g\triangleright h)s(g)\\
r\bigl(u(x)\bigl)s\bigl(v(x)\bigl) &=& x, \quad v(r(h)) = 1, \quad
u(s(g))=1
\end{eqnarray*}
for all $g_{1}, g_{2},g \in G$ and $h\in H$.
\end{enumerate}
\end{corollary}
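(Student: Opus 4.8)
The plan is to obtain parts (1) and (2) as direct reformulations of the two universal properties in \thref{2.1.8}, and then to deduce parts (3) and (4) by writing ``$w$ (respectively $\psi$) is an isomorphism'' as ``$w$ (respectively $\psi$) admits a two-sided inverse that is itself a morphism of groups'', describing that inverse via the already-established parts (1) and (2).

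For part (1), in the forward implication I would start from a morphism of groups $w$ and put $u := w \circ i_H$, $v := w \circ i_G$; then $u$ is a morphism of groups and $v$ is a map, identities \equref{2.1.14'} and \equref{2.1.14'2} are obtained by applying $w$ to the relations $i_G(g_1) i_G(g_2) = i_H\bigl(f(g_1,g_2)\bigr) i_G(g_1 g_2)$ and $i_G(g) i_H(h) = i_H(g \triangleright h) i_G(g)$, which hold in $H \#_{\alpha}^{f}\, G$ by \equref{4} and \equref{norm2}, while \equref{2.1.14'3} is immediate from \equref{2.1.gen}. Conversely, a pair $(u,v)$ subject to \equref{2.1.14'} and \equref{2.1.14'2} is precisely an object $(X,(u,v))$ of $\cat$, so the initial-object property of \thref{2.1.8} yields the unique morphism with $w(h,g) = u(h)v(g)$. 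Part (2) is the mirror image: from a morphism $\psi$ I set $u := \pi_H \circ \psi$ and $v := \pi_G \circ \psi$, so that $v$ is a morphism of groups (a composite of morphisms) while $u$ is a map; identity \equref{2.1.16'} is just \equref{2.1.16} read off for the object $(X,(u,v))$ of $\Cat$, and \equref{2.1.16''} holds because $\pi_H$ and $\pi_G$ are the canonical projections, the converse being the final-object property of \thref{2.1.8}.

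For part (3) I would use that a homomorphism of groups is an isomorphism exactly when it has a two-sided inverse which is again a homomorphism. Thus $w$ is an isomorphism if and only if there is a morphism of groups $\psi : X \rightarrow H \#_{\alpha}^{f}\, G$ with $w \circ \psi = Id_X$ and $\psi \circ w = Id$. By part (2) such a $\psi$ is forced to have the form $\psi(x) = \bigl(s(x), r(x)\bigr)$ with $s : X \to H$ a map and $r : X \to G$ a morphism of groups satisfying $s(xy) = s(x)\bigl(r(x) \triangleright s(y)\bigr) f\bigl(r(x), r(y)\bigr)$; the relation $w \circ \psi = Id_X$ becomes $u\bigl(s(x)\bigr) v\bigl(r(x)\bigr) = x$, while $\psi \circ w = Id$ need only be tested on the generating set $(H \times \{1\}) \cup (\{1\} \times G)$ of \equref{2.1.gen}, where it reads $\psi\bigl(u(h)\bigr) = (h,1)$ and $\psi\bigl(v(g)\bigr) = (1,g)$, equivalently $s \circ u = Id_H$, $r(u(h)) = 1$, $s(v(g)) = 1$ and $r \circ v = Id_G$. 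Part (4) is handled dually: by part (1) the candidate inverse $w : H \#_{\alpha}^{f}\, G \rightarrow X$ has the form $w(h,g) = r(h)s(g)$ with $r : H \to X$ a morphism of groups and $s : G \to X$ a map satisfying \equref{2.1.14'} and \equref{2.1.14'2}, and the requirements $w \circ \psi = Id_X$ and $\psi \circ w = Id$ translate, after evaluation on generators, into $r\bigl(u(x)\bigr)s\bigl(v(x)\bigr) = x$, $u \circ r = Id_H$, $v(r(h)) = 1$, $u(s(g)) = 1$ and $v \circ s = Id_G$.

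The only step requiring genuine care, rather than mechanical translation of composite identities into the displayed equations, is the reduction in parts (3) and (4) of the identity $\psi \circ w = Id$ on $H \#_{\alpha}^{f}\, G$ to its values on generators; this is legitimate because both sides are morphisms of groups that agree on the generating set supplied by \equref{2.1.gen}. The main bookkeeping obstacle is keeping straight the interchange of roles among the four maps $u, v, r, s$ between the ``out of'' and ``into'' the crossed product descriptions, since a map that is a group morphism in one part is only a set map in another.
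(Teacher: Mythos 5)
Your proposal is correct and follows essentially the same route as the paper: parts (1) and (2) are read off from the initial/final object properties of \thref{2.1.8} (with $u=w\circ i_H$, $v=w\circ i_G$, respectively $u=\pi_H\circ\psi$, $v=\pi_G\circ\psi$), and parts (3) and (4) are obtained by describing a two-sided inverse homomorphism via the other part and evaluating the composite identities on the generating set $(H\times\{1\})\cup(\{1\}\times G)$. The paper's own proof is just a terser version of exactly this argument.
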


\begin{proof} 1) We shall use \thref{2.1.8}: if
$w : H \#_{\alpha}^{f} \, G \rightarrow X$ is a morphism of groups
then we take $u$ and $v$ given by $u (h) : = w (h, 1)$ and $ v(g):
= w( 1, g)$. Conversely, if $u$ and $v$ are given we define $w (h,
g) := u(h)v(g)$. 2) follows analogous from \thref{2.1.8}.

3) $w: H \#_{\alpha}^{f} \, G \rightarrow X$ is an isomorphism of
groups if and only if there exists $\psi : X \rightarrow H
\#_{\alpha}^{f} \, G$ a morphism of groups such that $\psi \circ w
= Id_{H \#_{\alpha}^{f} \, G}$ and $w \circ \psi = Id_{X}$. Using
$2)$ $\psi$ is a morphism of groups if and only if there exists $r
: X\rightarrow G$ a morphism of groups and a map $s : X\rightarrow
H$ such that:
$$
s(xy) = s(x)\bigl(r(x)\triangleright
s(y)\bigl)f\bigl(r(x),r(y)\bigl), \quad \psi (x) = \bigl( s(x),
r(x)\bigl)
$$
Now if we use $\psi \circ w = Id_{H \#_{\alpha}^{f} \, G}$ and
$w\circ \psi = Id_{X}$ for the generators of $H \#_{\alpha}^{f} \,
G$ the conclusion follows. Item 4) follows similarly.
\end{proof}

In particular we can find the morphisms between two crossed
products. If $\alpha ' : G \to \Aut (H)$ is another weak action we
shall denote $\alpha (g) (h) = g \rhd ' h$.

\begin{corollary}\colabel{morfisme2}
Let $(H, G, \alpha, f)$ and $(H, G, \alpha', f')$ be two
normalized crossed systems. There exists a bijection between the
set of all morphisms of groups $ \psi : H \#_{\alpha}^{f} \, G
\rightarrow H \#_{\alpha'}^{f'}\, G$ and the set of all quadruples
$(u, r, v, s)$, where $u : H \to H$, $r: G\to H$, $ v: G \to G$
are three maps, and $s: H \to G$ is a morphism of groups such that
\begin{eqnarray*}
v (g_1) v (g_2) &=& s \bigl( f (g_1, g_2) \bigl)\, v (g_1 g_2) \\
v (g) s (h) &=& s (g \rhd h) v(g)\\
u (h_1 h_2) &=& u (h_1) \bigl( s(h_1) \rhd ' u (h_2) \bigl) f'
\bigl( s (h_1), s(h_2) \bigl)
\end{eqnarray*}
\begin{eqnarray*}
u \bigl( g \rhd h \bigl) \Bigl( s( g \rhd h) \rhd ' r(g) \Bigl) f'
\Bigl( s( g \rhd h) , \, v(g) \Bigl) &=& r(g) \bigl(v(g) \rhd '
u(h)
\bigl) f' \bigl(v(g), s(h)\bigl) \\
r (g_1) \bigl(v (g_1) \rhd ' r (g_2) \bigl) f' \bigl (v (g_1), v
(g_2)\bigl) &=& u \bigl( f(g_1, g_2)\bigl) \Bigl( s \bigl( f(g_1,
g_2)\bigl) \rhd' r(g_1 g_2) \Bigl)\\
&&f' \bigl(s \bigl( f(g_1, g_2)\bigl), v(g_1g_2) \bigl)
\end{eqnarray*}

Furthermore, the one to one correspondence is given such that
$\psi : H \#_{\alpha}^{f} \, G \rightarrow H \#_{\alpha'}^{f'}\,
G$ is given by the formula
\begin{eqnarray*}
\psi (h, g) &=& \bigl(u(h), s(h)\bigl) \cdot \bigl(r(g), v(g)
\bigl) \\
&=& \Bigl( u(h) \bigl(s(h) \rhd' \, r (g) \bigl) f' \bigl( s(h),
v(g)\bigl), \, s(h)v(g) \Bigl)
\end{eqnarray*}\eqlabel{50a}
and $u(1) = 1$, $v(1) = 1$ and $r(1) = 1$,  for any such quadruple
$(u, r, v, s)$.
\end{corollary}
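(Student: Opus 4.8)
The plan is to obtain this statement as a direct application of \coref{morfisme1}(1) to the target group $X := H \#_{\alpha'}^{f'}\, G$, which is legitimate precisely because $H \#_{\alpha}^{f} \, G$ is an initial object of $\cat$ (\thref{2.1.8}(1)). By \coref{morfisme1}(1), a map $\psi : H \#_{\alpha}^{f} \, G \to H \#_{\alpha'}^{f'}\, G$ is a morphism of groups if and only if it arises from a pair $(\lambda, \rho)$, where $\lambda : H \to H \#_{\alpha'}^{f'}\, G$ is a morphism of groups and $\rho : G \to H \#_{\alpha'}^{f'}\, G$ is a map, subject to the two compatibility conditions
$$\rho(g_1)\rho(g_2) = \lambda\bigl(f(g_1,g_2)\bigr)\rho(g_1g_2), \qquad \rho(g)\lambda(h) = \lambda(g\rhd h)\rho(g),$$
in which case $\psi(h,g) = \lambda(h)\rho(g)$. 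Thus the entire task reduces to repackaging the pair $(\lambda,\rho)$ as a quadruple and rewriting these two conditions.

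The repackaging is done via the canonical projections: I would set $u := \pi_H\circ\lambda$, $s := \pi_G\circ\lambda$, $r := \pi_H\circ\rho$ and $v := \pi_G\circ\rho$, so that $\lambda(h) = (u(h),s(h))$ and $\rho(g) = (r(g),v(g))$. Here $u:H\to H$, $r:G\to H$ and $v:G\to G$ are merely maps, whereas $s:H\to G$ is automatically a morphism of groups, being the composite of the two morphisms $\lambda$ and $\pi_G$. The first block of work is to unravel the statement ``$\lambda$ is a morphism of groups'': computing $\lambda(h_1)\lambda(h_2)$ inside $H \#_{\alpha'}^{f'}\, G$ with the multiplication \equref{4} and comparing to $\lambda(h_1h_2)$, the $G$-components reproduce the multiplicativity of $s$, while the $H$-components give exactly the third asserted identity $u(h_1h_2) = u(h_1)\bigl(s(h_1)\rhd' u(h_2)\bigr)f'\bigl(s(h_1),s(h_2)\bigr)$.

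The second block is to push the two displayed compatibility conditions through \equref{4} in the same componentwise way. For the first condition, the $G$-components yield $v(g_1)v(g_2) = s\bigl(f(g_1,g_2)\bigr)v(g_1g_2)$ and the $H$-components yield the long fifth identity; for the second condition, the $G$-components yield $v(g)s(h) = s(g\rhd h)v(g)$ and the $H$-components yield the fourth identity. Since each comparison of components is an honest equivalence, reading these steps in both directions produces the claimed bijection, and the stated formula for $\psi(h,g)$ is just $\lambda(h)\rho(g) = (u(h),s(h))\cdot(r(g),v(g))$ expanded via \equref{4}. The normalizations come out afterwards for free: as $(H,G,\alpha',f')$ is normalized, $\lambda(1) = 1_{H \#_{\alpha'}^{f'}\, G} = (1,1)$ forces $u(1)=1$, and specializing the first compatibility condition at $g_1=g_2=1$ (using $f(1,1)=1$) gives $\rho(1)\rho(1)=\rho(1)$, hence $\rho(1)=(1,1)$ and $r(1)=v(1)=1$.

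I expect the only genuine obstacle to be the bookkeeping involved in extracting the $H$-components: the $G$-components fall out instantly, but the $H$-component of each condition is a three-factor product built from a value of $u$ or $r$, an $\alpha'$-twist $\rhd'$, and an $f'$-term, and one must expand expressions such as $(u(g\rhd h),s(g\rhd h))\cdot(r(g),v(g))$ carefully so that the cross terms assemble into the fourth and fifth identities exactly as stated. Crucially, no property of the target crossed system beyond the multiplication rule \equref{4} is needed here, since all the structural content has already been absorbed into \coref{morfisme1}(1).
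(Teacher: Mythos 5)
Your proposal is correct and follows essentially the same route as the paper: apply \coref{morfisme1}(1) with $X = H \#_{\alpha'}^{f'}\, G$ to get the pair $(\lambda,\rho)=(\Phi_1,\Phi_2)$, then decompose each through the canonical projections and translate the conditions componentwise via \equref{4}. The only cosmetic difference is that the paper invokes \coref{morfisme1}(2) to decompose the group morphism $\Phi_1$ into $(u,s)$ with the third identity, whereas you expand the multiplicativity of $\lambda$ directly — which is the same computation.
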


\begin{proof}
Let $\psi : H \#_{\alpha}^{f} \, G \rightarrow H
\#_{\alpha'}^{f'}\, G$ be a morphism of groups. Using 1) of
\coref{morfisme1} for $X : = H \#_{\alpha'}^{f'}\, G$ we get that
there exists a unique pair $(\Phi_1, \Phi_2)$, where $\Phi_1 : H
\to X$ is a morphism of groups and a map $\Phi_2 : G \to X$ such
that
\begin{eqnarray}
\psi (h, g) &=& \Phi_1 (h) \Phi_2(g) \eqlabel{coc1}\\
\Phi_2 (g_1) \Phi_2 (g_2) &=& \Phi_1 \bigl( f(g_1, g_2) \bigl)
\Phi_2 (g_1g_2)\\
\Phi_2(g) \Phi_1(h) &=& \Phi_1 (g\rhd h)\Phi_2 (g)
\end{eqnarray}
for all $h\in H$, $g$, $g_1$, $g_2 \in G$. $\Phi_1 : H \to H
\#_{\alpha'}^{f'}\, G$ is a morphism of groups: thus using 2) of
\coref{morfisme1} we get a unique pair $(s, u)$, where $s: H \to
G$ is a morphism of groups, $u: H\to H$ is a map such that
\begin{eqnarray}
\Phi_1 (h) &=& (u(h), s(h)) \\
u (h_1 h_2) &=& u(h_1) \bigl(s(h_1) \rhd' u(h_2)\bigl) f' \bigl(
s(h_1), s(h_2) \bigl) \eqlabel{coc3}
\end{eqnarray}
for all $h$, $h_1$, $h_2 \in H$. $\Phi_2 : G \to H
\#_{\alpha'}^{f'}\, G$ is a map: hence there exists a unique pair
of maps  $(r, v)$, where $r: = \pi_H \circ \Phi_2 : G \to H$ and
$v:=\pi_G \circ \Phi_2 : G \to G$ such that $\Phi_2 (g) = (r(g),
v(g))$, for all $g\in G$. To conclude, for any morphism of groups
$ \psi : H \#_{\alpha}^{f} \, G \rightarrow H \#_{\alpha'}^{f'}\,
G$ there exist unique quadruples $(s, u, r, v)$ as above such that
$$
\psi (h, g) = \bigl(u(h), s(h)\bigl) \cdot \bigl(r(g), v(g) \bigl)
= \Bigl( u(h) \bigl(s(h) \rhd' \, r (g) \bigl) f' \bigl( s(h),
v(g)\bigl), \, s(h)v(g) \Bigl)
$$
Moreover, the compatibility conditions \equref{coc1} -
\equref{coc3} are reduced to exactly in the five compatibility
conditions of the statement of the Corollary. Finally, if we
specialize the first condition at $g_1 = g_2 = 1$, we obtain $v(1)
= 1$. Then if we put $h_1 = h_2 = 1$ in the third relation we get
that $u(1) = 1$ and if we let $g=1$ and $h=1$ in the fourth
condition we obtain $r(1) = 1$.
\end{proof}

The morphisms between two crossed products that stabilize the ends
are much easier to describe:

\begin{corollary}\colabel{morfisme23}
Let $(H, G, \alpha, f)$ and $(H, G, \alpha', f')$ be two
normalized crossed systems. There exists a bijection between the
set of all (iso)morphisms of groups $ \psi : H \#_{\alpha}^{f} \,
G \rightarrow H \#_{\alpha'}^{f'}\, G$ such that the diagram
\begin{equation}\eqlabel{diagramacrossed1}
\begin{diagram}
1 \rTo & H & \rTo^{i_H} & H \#_{\alpha}^{f} \, G & \rTo^{\pi_G} & G & \rTo & 1\\
& \dTo_{Id_{H}} & & \dTo_{\psi} & & \dTo_{Id_{G}} & \\
1 \rTo & H & \rTo^{i_H'} & H \#_{\alpha'}^{f'} \, G &
\rTo^{\pi_G'} & G & \rTo & 1
\end{diagram}
\end{equation}
is commutative\footnote{Such a morphism is necessarily an
isomorphism of groups \cite[Theorem 3.2.3]{bechtell}.} and the set
of all maps $r: G\to H$ such that
\begin{eqnarray}
g \rhd' h &=& r(g)^{-1} (g \rhd  h) r(g) \eqlabel{471} \\
f' (g_1, g_2 ) &=&  \bigl(g_1 \rhd ' r (g_2)^{-1} \bigl) r
(g_1)^{-1} f(g_1, g_2) r(g_1 g_2) \eqlabel{501}
\end{eqnarray}
for all $g$, $g_1$, $g_2\in G$ and $h\in H$. Furthermore, the one
to one correspondence is given such that $\psi : H \#_{\alpha}^{f}
\, G \rightarrow H \#_{\alpha'}^{f'}\, G$ is given by the formula
\begin{eqnarray}
\psi (h, g) = \bigl( h r (g), g \bigl)\eqlabel{5010}
\end{eqnarray}
for all $h\in H$ and $g\in G$.
\end{corollary}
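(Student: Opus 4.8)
The plan is to derive everything from \coref{morfisme2}, which already parametrizes \emph{every} morphism $\psi : H \#_{\alpha}^{f} \, G \to H \#_{\alpha'}^{f'}\, G$ by a quadruple $(u,r,v,s)$ with $u:H\to H$, $r:G\to H$, $v:G\to G$ maps and $s:H\to G$ a group morphism, through the formula $\psi(h,g)=\bigl(u(h)\,(s(h)\rhd' r(g))\,f'(s(h),v(g)),\,s(h)v(g)\bigr)$. The point of the present statement is that commutativity of \equref{diagramacrossed1} rigidly fixes three of the four components, leaving only $r$ free, so the first task is to translate the two commuting squares into constraints on $(u,r,v,s)$.

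For the right-hand square, $\pi_G'\circ\psi=\pi_G$ says that the $G$-component $s(h)v(g)$ of $\psi(h,g)$ equals $g$ for all $h,g$. Setting $h=1$ and using that $s$ is a morphism (so $s(1)=1$) gives $v=Id_G$; then $s(h)g=g$ for all $g$ forces $s\equiv 1$, the constant map to $1\in G$. For the left-hand square, $\psi\circ i_H=i_H'$ reads $\psi(h,1)=(h,1)$; substituting $s\equiv 1$, $v=Id_G$ and the normalization identities \equref{norm2} collapses the first component to $u(h)r(1)$, so comparing with $(h,1)$ and invoking $u(1)=1$ from \coref{morfisme2} forces $r(1)=1$ and $u=Id_H$. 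Conversely these three choices visibly make both squares commute, and feeding them back into the displayed formula yields $\psi(h,g)=\bigl(h\,(1\rhd' r(g))\,f'(1,g),\,g\bigr)=(h r(g),g)$, which is \equref{5010}.

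Next I would substitute $u=Id_H$, $v=Id_G$, $s\equiv 1$ into the five compatibility conditions of \coref{morfisme2}. Using \equref{norm2} (that $1\rhd' h=h$ and $f'(1,g)=f'(g,1)=1$) together with \equref{8cross}, the first three conditions become tautologies, the fourth reduces to $(g\rhd h)\,r(g)=r(g)\,(g\rhd' h)$, which is exactly \equref{471}, and the fifth reduces to $r(g_1)\,(g_1\rhd' r(g_2))\,f'(g_1,g_2)=f(g_1,g_2)\,r(g_1g_2)$. Solving this last relation for $f'(g_1,g_2)$ and rewriting $(g_1\rhd' r(g_2))^{-1}=g_1\rhd'(r(g_2)^{-1})$ via \equref{8cross} produces precisely \equref{501}. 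This gives the bijection in both directions: a diagram-commuting $\psi$ yields an $r$ satisfying \equref{471}--\equref{501}, and conversely any such $r$ gives, through the quadruple $(Id_H,r,Id_G,\mathbf 1)$, a morphism $\psi(h,g)=(hr(g),g)$ whose squares commute by construction. Finally, every such $\psi$ is automatically an isomorphism: one may either invoke the short five lemma in the form cited in the footnote, or simply check that $(h,g)\mapsto(h\,r(g)^{-1},g)$ is a two-sided inverse.

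The computations are all routine, so the only real care is in the bookkeeping of the last two conditions. Since $H$ is noncommutative one must keep the factors in the correct order and apply the normalization and automorphism identities in exactly the right places to land on the stated right-hand sides rather than an equivalent rearrangement. I expect the single delicate step to be matching the reduced form of the fifth condition to \equref{501} through $(g_1\rhd' r(g_2))^{-1}=g_1\rhd'(r(g_2)^{-1})$, which is where the only plausible slip lies.
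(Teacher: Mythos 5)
Your proposal is correct and follows essentially the same route as the paper: both deduce from \coref{morfisme2} that commutativity of the diagram forces $u=Id_H$, $v=Id_G$ and $s\equiv 1$, and then reduce the five compatibility conditions to \equref{471} and \equref{501}. Your write-up merely makes explicit the normalization bookkeeping and the inverse $(h,g)\mapsto (h\,r(g)^{-1},g)$ that the paper leaves to the reader.
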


\begin{proof}
Let $\psi:H \#_{\alpha}^{f} \, G\rightarrow H \#_{\alpha'}^{f'} \,
G$ be a morphism of groups such that the above diagram is
commutative. Thus there exist three maps $u: H\rightarrow H$, $r:
G\rightarrow H$, $v: G\rightarrow G$ and a morphism of groups $s:
H\rightarrow G$ such that the five relations of \coref{morfisme2}
hold and $\psi\circ i_H = i_H'\circ Id_{H}$, $Id_{G}\circ \pi_G =
\pi_G'\circ \psi$. It follows that $u = Id_{H}$, $v = Id_{G}$ and
$s(h) = 1$ for any $h \in H$. Hence, the five compatibility
conditions of \coref{morfisme2} are reduced to \equref{471} and
\equref{501} and we are done.
\end{proof}

\begin{remark} \coref{morfisme23} is a classification
result: see  \coref{clasclas} below for the exact statement. It
generalizes the Schreier theorem \cite[Theorem 7.34]{rotman} that
can be obtained if we let $H$ to be an abelian group and $ \alpha'
= \alpha$. In this case \equref{471} holds and \equref{501} can be
written as:
$$
f(g_{1},g_{2})f'(g_{1},g_{2})^{-1}=r (g_1) \bigl(g_1 \rhd  r (g_2)
\bigl)r(g_{1},g_{2})^{-1}
$$
for all $g_1$, $g_2 \in G$ which means that $f' f^{-1}$ is
coboundary.
\end{remark}

Using \coref{morfisme1} we can also describe the isomorphisms
between two crossed products: the explicit description is left to
the reader (we refer to \cite{agore} for full details).
\coref{morfisme2} can be used to compute all the (iso)morphism
between all special cases of crossed products. We shall indicate
only two relevant cases: first we shall compute the morphisms
between a semidirect product and a twisted product and then we
shall describe the morphisms between a crossed product and a
direct product.

\begin{corollary}\colabel{semidirectvstwisted}
Let H and G be two groups, $f: G\times G\rightarrow Z(H)$ a
normalized 2-cocycle and $\alpha: G\rightarrow Aut(H)$ a morphism
of groups. There exists a bijection between the set of all
morphisms of groups $\psi: H \ltimes_{\alpha} \, G \rightarrow H
\times^{f} \, G $ and the set of all quadruples $(s,u,r,v)$, where
$s: H\rightarrow G$, $v: G\rightarrow G$ are morphisms of groups
and $u: H\rightarrow H$, $r: G\rightarrow H$ are two maps such
that:
\begin{eqnarray*}
r(g_{1}g_{2}) &=& r(g_{1})r(g_{2})f(v(g_{1}),v(g_{2}))\\
u(h_{1}h_{2}) &=& u(h_{1}) u(h_{2}) f(s(h_{1}),s(h_{2})) \\
v(g)s(h) &=& s(g\rhd h)v(g)\\
r(g)u(h)f(v(g),s(h)) &=& u(g\rhd h)r(g)f(s(g\rhd h),v(g))
\end{eqnarray*}
for all $g$, $g_1$, $g_2\in G$, $h$, $h_1$, $h_2\in H$. Moreover,
through the above bijection $\psi$ is given by
$$
\psi(h,g)=\bigl( u(h) r (g) f(s(h),v(g)), \, s(h)v(g) \bigl)
$$
for all $h\in H$ and $g\in G$.
\end{corollary}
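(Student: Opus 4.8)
The plan is to recognize both products as instances of the crossed product construction and then to invoke \coref{morfisme2} directly. By the second Example following \coref{zz}, the semidirect product $H \ltimes_{\alpha} G$ is precisely the crossed product $H \#_{\alpha}^{f_0} \, G$ attached to the normalized crossed system $(H, G, \alpha, f_0)$, in which $\alpha$ is the given group morphism and $f_0 : G \times G \to H$ is the trivial cocycle $f_0(g_1, g_2) = 1$. Likewise, by the third Example, the twisted product $H \times^{f} \, G$ is the crossed product $H \#_{\alpha'}^{f} \, G$ attached to $(H, G, \alpha', f)$, in which $\alpha'$ is the trivial action, i.e. $g \rhd' h = h$ for all $g \in G$ and $h \in H$, and $f$ is the given $2$-cocycle valued in $Z(H)$. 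Both crossed systems are normalized, so \coref{morfisme2} applies verbatim, with source cocycle $f_0$ and target data $(\alpha', f)$, and already supplies a bijection between the morphisms $\psi$ and the quadruples $(u, r, v, s)$ satisfying its five compatibility conditions.

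Next I would substitute these particular choices into those five conditions and into the formula for $\psi$. Three simplifications do all the work: the source cocycle is trivial, so every $f_0(g_1, g_2)$ collapses to $1$; the target action is trivial, so every expression $x \rhd' y$ reduces to $y$; and normalization gives $f(1, g) = f(g, 1) = 1$ together with $s(1) = 1$ and $u(1) = 1$, the latter two being recorded in \coref{morfisme2}. Tracking these through, the first condition $v(g_1)v(g_2) = s(f_0(g_1, g_2))\, v(g_1 g_2)$ becomes $v(g_1)v(g_2) = v(g_1 g_2)$, which forces $v$ to be a morphism of groups and accounts for the extra hypothesis in the statement. The second condition $v(g)s(h) = s(g \rhd h)v(g)$ involves only the source action and carries over unchanged. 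The third loses its $\rhd'$ factor and becomes $u(h_1 h_2) = u(h_1)u(h_2) f(s(h_1), s(h_2))$; the fourth loses both $\rhd'$ factors and becomes $u(g \rhd h)\, r(g)\, f(s(g \rhd h), v(g)) = r(g)\, u(h)\, f(v(g), s(h))$. In the fifth condition the trivial source cocycle appears nested inside $u$ and $s$ on the right-hand side, so $u(f_0) = u(1) = 1$, $s(f_0) = s(1) = 1$, and $f(s(1), v(g_1 g_2)) = f(1, v(g_1 g_2)) = 1$ collapse that side to $r(g_1 g_2)$, yielding $r(g_1)r(g_2) f(v(g_1), v(g_2)) = r(g_1 g_2)$. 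These are exactly the four displayed equations, and the formula $\psi(h, g) = (u(h)(s(h) \rhd' r(g)) f(s(h), v(g)),\, s(h)v(g))$ specializes, by triviality of $\rhd'$, to the asserted $\psi(h, g) = (u(h)\, r(g)\, f(s(h), v(g)),\, s(h)v(g))$.

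The argument is essentially mechanical, and the only points demanding care, such as the obstacle is, are the bookkeeping in the fifth condition, where the trivial source cocycle must be propagated correctly through the nested arguments $u(f_0(g_1, g_2))$ and $s(f_0(g_1, g_2))$, and the observation that the first condition promotes $v$ from a mere map to a group homomorphism, which is why the statement lists both $s$ and $v$ among the morphisms rather than among the maps. No new idea beyond \coref{morfisme2} is required; the entire content is the correct specialization of its five identities.
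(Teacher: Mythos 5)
Your proof is correct and follows exactly the paper's route: the paper's own proof is the one-line remark that one applies Corollary~\ref{co:morfisme2} with the source cocycle trivial (semidirect product) and the target action trivial (twisted product), and your write-up simply carries out the resulting bookkeeping explicitly. The specializations you record — the first condition collapsing to $v(g_1)v(g_2)=v(g_1g_2)$, and the fifth reducing to $r(g_1)r(g_2)f(v(g_1),v(g_2))=r(g_1g_2)$ via $u(1)=s(1)=1$ and normalization of $f$ — are all accurate.
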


\begin{proof}
We apply \coref{morfisme2} in the case that $f$ is a trivial
cocycle and $\alpha'$ is a trivial action.
\end{proof}

\begin{corollary}\colabel{crossedvsdirect}
Let $(H, G, \alpha, f)$ be a normalized crossed system. There
exists a bijection between the set of all morphisms of groups
$\psi: H \#_{\alpha}^{f}\, G\rightarrow H\times G$ and the set of
all quadruples $(s, u, r, v)$, where $s: H\rightarrow G$, $u:
H\rightarrow H$ are morphisms of groups, and $r: G\rightarrow H$,
$v: G\rightarrow G$ are maps such that:
\begin{eqnarray*}
v(g_{1})v(g_{2}) &=& s(f(g_{1},g_{2}))v(g_{1}g_{2}), \quad  r(g)u(h) = u(g\rhd h)r(g) \\
r(g_{1})r(g_{2}) &=& u(f(g_{1},g_{2}))r(g_{1}g_{2}), \quad
v(g)s(h) = s(g\rhd h)v(g)
\end{eqnarray*}
for all $g$, $g_1$, $g_2\in G$, $h$, $h_1$, $h_2\in H$. Moreover,
through the above bijection $\psi$ is given by
$$
\psi(h,g)=(u(h) r(g), \, s(h)v(g))
$$
for all $g\in G$, $h\in H$.
\end{corollary}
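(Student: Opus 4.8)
The plan is to derive this corollary as a specialization of \coref{morfisme2}. The essential observation is that the direct product $H \times G$ is precisely the crossed product $H \#_{\alpha'}^{f'} \, G$ associated to the \emph{trivial} crossed system, namely the one in which $\alpha'$ is the trivial action ($g \rhd' h = h$ for all $g \in G$, $h \in H$) and $f'$ is the trivial cocycle ($f'(g_1, g_2) = 1$ for all $g_1, g_2 \in G$). Consequently, the morphisms $\psi : H \#_{\alpha}^{f} \, G \to H \times G$ coincide with the morphisms $\psi : H \#_{\alpha}^{f} \, G \to H \#_{\alpha'}^{f'} \, G$ already classified by \coref{morfisme2}, and the whole task reduces to rewriting its five compatibility conditions and the formula for $\psi$ under these two trivializations.

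First I would substitute $g \rhd' h = h$ and $f'(g_1, g_2) = 1$ into the five relations furnished by \coref{morfisme2}. The first two,
$$
v(g_1) v(g_2) = s\bigl(f(g_1,g_2)\bigr)\, v(g_1 g_2), \qquad v(g)\, s(h) = s(g \rhd h)\, v(g),
$$
involve neither $\alpha'$ nor $f'$ and therefore survive verbatim. The third relation collapses: since $s(h_1) \rhd' u(h_2) = u(h_2)$ and $f'\bigl(s(h_1), s(h_2)\bigr) = 1$, it becomes $u(h_1 h_2) = u(h_1) u(h_2)$, so that $u$ is promoted from a mere map to a morphism of groups — this is exactly why the statement lists $u$ among the morphisms.

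Next I would simplify the remaining two relations in the same fashion. Deleting every occurrence of $\rhd'$ and every $f'$-factor, the fourth relation reduces to $u(g \rhd h)\, r(g) = r(g)\, u(h)$, i.e.\ $r(g)\, u(h) = u(g \rhd h)\, r(g)$, while the fifth reduces to $r(g_1)\, r(g_2) = u\bigl(f(g_1,g_2)\bigr)\, r(g_1 g_2)$. Together with the first two, these are precisely the four displayed compatibility conditions, with $s$ and $u$ morphisms of groups and $r$, $v$ maps. Specializing the formula $\psi(h,g) = \bigl(u(h)\,(s(h)\rhd' r(g))\, f'(s(h),v(g)),\, s(h)v(g)\bigr)$ of \coref{morfisme2} then yields $\psi(h,g) = \bigl(u(h)\, r(g),\, s(h)\, v(g)\bigr)$, matching the asserted bijection.

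There is no genuine obstacle here; the argument is a bookkeeping specialization of a theorem already in hand. The only points demanding a little attention are confirming that the two trivializations collapse the third relation exactly into the group-morphism condition on $u$, and checking that the nontrivial data $\alpha$, $f$ on the source cause no interference — they do not, since they enter only through the arguments $g \rhd h$ and $f(g_1,g_2)$, which are untouched when one trivializes the structure on the target $H \times G$.
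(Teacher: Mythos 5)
Your proposal is correct and follows exactly the paper's own route: the paper's proof is the one-line statement that one applies Corollary~\ref{co:morfisme2} with $f'$ the trivial cocycle and $\alpha'$ the trivial action, and your write-up simply carries out that specialization explicitly (correctly identifying that the third relation collapses to $u$ being a group morphism and that the remaining four relations become the displayed conditions).
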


\begin{proof}
We apply \coref{morfisme2} in the case that $f'$ is a trivial
cocycle and $\alpha'$ is a trivial action.
\end{proof}

The next Corollary describes the category of the representations
of a crossed product $H \#_{\alpha}^{f} \, G$.

\begin{corollary}\colabel{reprezentari}
Let $(H, G, \alpha, f)$ be a normalized crossed system and $k$ be
a field. Then there exists an equivalence of categories between
${}_{k [H \#_{\alpha}^{f} \, G]} {\mathcal M}$ the category of
left $k [H \#_{\alpha}^{f} \, G]$-modules and the category of all
triples $(V, \bullet, \star)$ consisting of a left $k[H]$-module
$(V, \bullet)$ and a $k$-linear map $\star : k[G] \otimes_k V \to
V$ such that
$$g_1 \star (g_2 \star v) = f\bigl(g_1, g_2 \bigl)\bullet \bigl(
(g_1g_2) \star v \bigl), \qquad  g\star (h\bullet v) = (g\rhd h)
\bullet (g\star v)$$ for all $g$, $g_1$, $g_2\in G$, $h\in H$ and
$v\in V$.
\end{corollary}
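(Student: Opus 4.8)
The plan is to reduce the statement to the universal property already recorded in \coref{morfisme1}. Recall that a left $k[E]$-module structure on a $k$-vector space $V$ is the same datum as a morphism of groups $\rho : E \to \mathrm{GL}(V)$ into the group of $k$-linear automorphisms of $V$, via $\rho(x)(w) = x\cdot w$. Taking $E = H \#_{\alpha}^{f} \, G$, describing the objects of ${}_{k[H \#_{\alpha}^{f} \, G]}{\mathcal M}$ therefore amounts to describing all group morphisms $\rho : H \#_{\alpha}^{f} \, G \to \mathrm{GL}(V)$. Applying part (1) of \coref{morfisme1} with $X := \mathrm{GL}(V)$, such a $\rho$ corresponds bijectively to a pair $(u,v)$, where $u : H \to \mathrm{GL}(V)$ is a morphism of groups and $v : G \to \mathrm{GL}(V)$ is a map subject to \equref{2.1.14'} and \equref{2.1.14'2}, with $\rho(h,g) = u(h)\,v(g)$.

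Next I would translate this pair into the announced triple. The morphism of groups $u : H \to \mathrm{GL}(V)$ is exactly a left $k[H]$-module structure $\bullet$ on $V$, via $h \bullet w := u(h)(w)$, while the map $v : G \to \mathrm{GL}(V)$ is exactly a $k$-linear map $\star : k[G] \otimes_k V \to V$ determined by $g \star w := v(g)(w)$ and extended $k$-linearly. Evaluating the first relation \equref{2.1.14'}, $v(g_1) v(g_2) = u(f(g_1,g_2)) v(g_1 g_2)$, on a vector $w$ gives precisely $g_1 \star (g_2 \star w) = f(g_1,g_2) \bullet \bigl( (g_1 g_2) \star w \bigr)$, and evaluating the second relation \equref{2.1.14'2}, $v(g) u(h) = u(g \rhd h) v(g)$, on $w$ gives $g \star (h \bullet w) = (g \rhd h) \bullet (g \star w)$. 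Hence the compatibility conditions of \coref{morfisme1} are, term by term, the two conditions in the statement.

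The point requiring care — and the main obstacle — is unitality and invertibility: \coref{morfisme1} delivers a map $v$ valued in the \emph{group} $\mathrm{GL}(V)$, so on the triple side one must ensure that each operator $g \star -$ is a $k$-linear \emph{automorphism} of $V$. For a genuine representation this is automatic; in particular, since $\Gamma$ is normalized we have $f(1,1)=1$ and, by \leref{forcross}, $f(1,g)=f(g,1)=1$ and $1 \rhd h = h$, so the unit of $H \#_{\alpha}^{f} \, G$ is $(1,1)$ and acts as $\mathrm{id}_V$, i.e. $1 \star w = w$. Conversely, one must read $\star$ as unital in this sense (as it always is for a genuine module); granting $1\star w = w$, specializing the first condition to $g_1 = g$, $g_2 = g^{-1}$ yields $(g \star -)\circ(g^{-1} \star -) = f(g,g^{-1}) \bullet -$ and symmetrically, so each $g \star -$ is invertible because $f(g,g^{-1}) \bullet -$ is. This confirms that (unital) triples and representations correspond bijectively.

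Finally, for the morphisms I would observe that a $k[H \#_{\alpha}^{f} \, G]$-linear map $V \to V'$ is exactly a $k$-linear map commuting with the action of every generator of $H \#_{\alpha}^{f} \, G$, that is, commuting with both $\bullet$ and $\star$ — which is precisely a morphism of triples. Since $(H\times\{1\})\cup(\{1\}\times G)$ generates the group by \equref{2.1.gen}, no further conditions arise. The assignment $\rho \mapsto (V,\bullet,\star)$ is thus bijective on objects and on Hom-sets and is manifestly compatible with composition, so it defines an isomorphism (in particular an equivalence) of the two categories, as claimed.
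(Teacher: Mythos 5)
Your proof is correct and follows exactly the route the paper takes: its entire proof is ``apply 1) of \coref{morfisme1} for $X=\Aut_k(V)$'', and you have simply filled in the translation of the pair $(u,v)$ into $(\bullet,\star)$ and the treatment of morphisms. Your extra care about unitality of $\star$ and invertibility of the operators $g\star-$ addresses a detail the paper leaves implicit, and your resolution of it is sound.
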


\begin{proof} We apply 1) of \coref{morfisme1} for $X = \Aut_k
(V)$, the group of automorphisms  of a $k$-vector space $V$.
\end{proof}

The next Corollary is also of interest as it reminds us of the
classical Clifford third problem of group representations:

\begin{corollary}\colabel{clifford}
Let $X$ be a group, $(H, G, \alpha, f)$ a normalized crossed
system and $u : H \rightarrow X$ a morphism of groups. Then there
exists a morphism of groups $w: H\#_{\alpha}^{f}\, G \rightarrow
X$ such that the following diagram
$$\begin{diagram}
  & H             &\rTo^{i_H}     & H \#_{\alpha}^{f} \, G \\
  &\dTo^{u}  &\ldTo_{w} &         \\
  & X             &               &
\end{diagram}$$
is commutative if and only if there exists a map $v: G \to X$ such
that
$$
v(g_1)v(g_2) = u (f(g_1, g_2)) v (g_1g_2), \qquad v(g) u(h) =
u(g\triangleright h) v(g)
$$
for all $g_1$, $g_2$, $g \in G$ and $h\in H$.
\end{corollary}

\begin{proof}
We apply 1) of \coref{morfisme1}.
\end{proof}

We shall consider now the exact sequence \equref{sir5} and we can
ask when $i_H : H \to H\#_{\alpha}^{f}\, G $ splits in the
category of groups. For $X: = H$ and $u := {\rm Id}_H$ in the
above corollary we obtain the answer:

\begin{corollary}\colabel{sumand direct}
Let $(H, G, \alpha, f)$ be a normalized crossed system. There
exists a morphism of groups $w: H\#_{\alpha}^{f}\, G \rightarrow
H$ such that $w \circ i_H = {\rm Id}_H$ if and only if there
exists a map $v: G \to H$ such that
$$
g\triangleright h = v(g) h v(g)^{-1}, \qquad f(g_1, g_2) = v(g_1)
v(g_2) v (g_1g_2) ^{-1}
$$
for all $g_1$, $g_2$, $g \in G$ and $h\in H$.
\end{corollary}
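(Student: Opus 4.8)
The plan is to obtain this corollary directly as the special case $X := H$, $u := {\rm Id}_H$ of \coref{clifford}. First I would observe that a morphism of groups $w : H \#_{\alpha}^{f} \, G \to H$ satisfying $w \circ i_H = {\rm Id}_H$ is exactly a morphism making the triangle in \coref{clifford} commute for this choice of $X$ and $u$: indeed, the commutativity of that triangle reads $w \circ i_H = u$, which here becomes $w \circ i_H = {\rm Id}_H$. Hence the existence of the desired splitting $w$ is, by \coref{clifford}, equivalent to the existence of a map $v : G \to H$ satisfying the two compatibility conditions of that corollary, now specialized to $u = {\rm Id}_H$.

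Next I would carry out the trivial rewriting of those two conditions. With $u = {\rm Id}_H$ they read
$$v(g_1)v(g_2) = f(g_1, g_2)\, v(g_1 g_2), \qquad v(g)\, h = (g \triangleright h)\, v(g).$$
Right-multiplying the first by $v(g_1 g_2)^{-1}$ gives $f(g_1, g_2) = v(g_1) v(g_2) v(g_1 g_2)^{-1}$, and right-multiplying the second by $v(g)^{-1}$ gives $g \triangleright h = v(g)\, h\, v(g)^{-1}$. These are precisely the two identities in the statement, so the asserted equivalence follows at once.

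I do not expect any genuine obstacle: the entire content is already packaged in \coref{clifford}, and the only point demanding a moment's attention is recognizing the correct specialization $X = H$, $u = {\rm Id}_H$ that turns the commutative triangle into the splitting relation $w \circ i_H = {\rm Id}_H$. Conceptually it is worth noting that the existence of $v$ says the weak action $\alpha$ is implemented by the inner automorphisms $h \mapsto v(g)\, h\, v(g)^{-1}$ and that $f$ is the coboundary of $v$; this is exactly the criterion for the inclusion $i_H$ in the exact sequence \equref{sir5} to split.
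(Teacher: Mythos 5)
Your proposal is correct and is exactly the paper's own argument: the paper introduces this corollary with the words ``For $X:=H$ and $u:={\rm Id}_H$ in the above corollary we obtain the answer,'' i.e.\ it too obtains the statement by specializing \coref{clifford} and rearranging the two compatibility conditions. The only implicit point (which is harmless here) is that for a \emph{normalized} crossed system the map $i_H$ of \equref{sir5} coincides with the canonical inclusion $h\mapsto (h,1)$ used in \coref{clifford}, since $f(1,1)=1$.
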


Dual to \coref{clifford} we have:

\begin{corollary}
Let $(H, G, \alpha, f)$ be a normalized crossed system, $X$ be a group
and $v : X \rightarrow G$ a morphism of groups. Then there
exists $w: X \rightarrow H\#_{\alpha}^{f}\, G$ a morphism of
groups such that the following diagram
$$
\begin{diagram}
  &                        &                      & X      &   &\\
  &                        &\ldTo^{w}  &\dTo_{v} &   & \\
  & H \#_{\alpha}^{f} \, G   & \rTo^{\pi_G}      & G   &\rTo  &1
\end{diagram}
$$
is commutative if and only if there exists a map $u: X\to H$
such that
$$
u(xy) = u (x) \bigl( v(x)\triangleright u(y) \bigl) f \bigl(v(x),
v(y) \bigl)
$$
for all $x$, $y\in X$.
\end{corollary}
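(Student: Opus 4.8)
The plan is to derive this as an immediate consequence of part 2) of \coref{morfisme1}, which already characterizes exactly when a map $w : X \to H \#_{\alpha}^{f} \, G$ is a morphism of groups. First I would unpack the commutativity of the triangle: the diagram commutes precisely when $\pi_G \circ w = v$, that is, when the $G$-component of $w$ coincides with the prescribed morphism $v$. So the content of the corollary is that a morphism $w$ with fixed second coordinate $v$ exists if and only if a suitable first coordinate $u : X \to H$ can be found satisfying the stated identity.

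For the forward implication, suppose such a $w$ exists. Applying part 2) of \coref{morfisme1} yields a map $u' : X \to H$ and a morphism of groups $v' : X \to G$ with $w(x) = \bigl(u'(x), v'(x)\bigr)$ and $u'(xy) = u'(x)\bigl(v'(x) \triangleright u'(y)\bigr) f\bigl(v'(x), v'(y)\bigr)$ for all $x, y \in X$. Reading off the second coordinate gives $v'(x) = \pi_G\bigl(w(x)\bigr) = v(x)$, so $v' = v$; setting $u := u'$ produces exactly the required map satisfying $u(xy) = u(x)\bigl(v(x) \triangleright u(y)\bigr) f\bigl(v(x), v(y)\bigr)$.

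Conversely, given a map $u : X \to H$ satisfying this identity with the given $v$, I would note that since $v$ is already a morphism of groups, the pair $(u, v)$ meets the hypotheses of part 2) of \coref{morfisme1}. Hence $w(x) := \bigl(u(x), v(x)\bigr)$ is a morphism of groups $X \to H \#_{\alpha}^{f} \, G$, and its second coordinate is $v$, so $\pi_G \circ w = v$ and the triangle commutes. There is no real obstacle here beyond the bookkeeping point worth flagging, namely that $v$ is fixed in advance: one must verify that the morphism $v'$ produced by the general statement of \coref{morfisme1} is forced to agree with $v$, which is precisely what reading the second coordinate against $\pi_G \circ w = v$ accomplishes. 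This corollary is simply the dual of \coref{clifford}, obtained by replacing part 1) of \coref{morfisme1} with part 2).
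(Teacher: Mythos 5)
Your proposal is correct and follows exactly the paper's route: the paper's entire proof is ``We apply the 2) of Corollary~\ref{co:morfisme1},'' and you have simply written out the routine unpacking (identifying $\pi_G\circ w=v$ with the second coordinate of $w$ and checking both directions) that the authors left implicit. No issues.
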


\begin{proof}
We apply the 2) of \coref{morfisme1}.
\end{proof}

\section{Categorical approach: the extension problem revised}\selabel{3}
Let $H$ and $G$ be two groups. We shall define three categories
associated to $H$ and $G$ having the same class of objects, namely
the set ${\rm Crossed} \, (H, G)$ of all normalized crossed
systems. We denote with $\mathcal{E}_1 (H, G)$, $\mathcal{E}_2 (H,
G)$, $\mathcal{E}_3 (H, G)$ the categories having as objects all
normalized crossed systems $(H, G, \alpha, f)$ and morphisms
defined as follows:

$\bullet$ A morphism $\psi : (\alpha, f) \to (\alpha', f')$ in
$\mathcal{E}_1 (H, G)$ is a morphism of groups $ \psi : H
\#_{\alpha}^{f} \, G \rightarrow H \#_{\alpha'}^{f'}\, G$ such
that the diagram
$$
\begin{diagram}
1 \rTo & H & \rTo^{i_H} & H \#_{\alpha}^{f} \, G & \rTo^{\pi_G} & G & \rTo & 1\\
& \dTo_{Id_{H}} & & \dTo_{\psi} & & \dTo_{Id_{G}} & \\
1 \rTo & H & \rTo^{i_H'} & H \#_{\alpha'}^{f'} \, G &
\rTo^{\pi_G'} & G & \rTo & 1
\end{diagram}
$$ is commutative.

$\bullet$ A morphism $(\eta, \psi, \gamma) : (\alpha, f) \to
(\alpha', f')$ in $\mathcal{E}_2 (H, G)$ is a triple $(\eta, \psi,
\gamma)$,  where $\eta : H\to H$, $\gamma: G\to G$ and $ \psi : H
\#_{\alpha}^{f} \, G \rightarrow H \#_{\alpha'}^{f'}\, G$ are
morphisms of groups such that the diagram
\begin{equation}\eqlabel{diamore2}
\begin{diagram}
1 \rTo & H & \rTo^{i_H} & H \#_{\alpha}^{f} \, G & \rTo^{\pi_G} & G & \rTo & 1\\
& \dTo_{\eta} & & \dTo_{\psi} & & \dTo_{\gamma} & \\
1 \rTo & H & \rTo^{i_H'} & H \#_{\alpha'}^{f'} \, G &
\rTo^{\pi_G'} & G & \rTo & 1
\end{diagram}
\end{equation}
is commutative.

$\bullet$ A morphism $\psi : (\alpha, f) \to (\alpha', f')$ in
$\mathcal{E}_3 (H, G)$ is a morphism of groups $ \psi : H
\#_{\alpha}^{f} \, G \rightarrow H \#_{\alpha'}^{f'}\, G$.

\begin{remarks}
1) The category $\mathcal{E}_1 (H, G)$ is a groupoid. This
category was used in the extension problem for groups. The
category $\mathcal{E}_2 (H, G)$ is a diagram-type category and,
from a categorical point of view, defining the morphism as we have
done in $\mathcal{E}_2 (H, G)$ is more natural than the one in
$\mathcal{E}_1 (H, G)$. Finally, in order to classify all crossed
product structures $H \#_{\alpha}^{f} \, G$ that can be
constructed for two fixed groups $H$ and $G$ (i.e. in order to
solve the classification part of Problem 1) we have to deal with
the morphisms as we defined them in $\mathcal{E}_3 (H, G)$.

2) $\mathcal{E}_1 (H, G)$ is a subcategory of $\mathcal{E}_3 (H,
G)$ and there exist functors connecting the above categories as
follows:
$$
F_{23}: \mathcal{E}_2 (H, G) \to \mathcal{E}_3 (H, G), \quad
F_{23} \bigl( (\alpha, f) \bigl) := (\alpha, f), \quad F_{23}
(\eta, \psi, \gamma): = \psi
$$
$$
F_{12}: \mathcal{E}_1 (H, G) \to \mathcal{E}_2 (H, G), \quad
F_{12} \bigl( \alpha, f \bigl) := (\alpha, f), \quad F_{12} (\psi)
:= (Id_H, \psi, Id_G)
$$
\end{remarks}

Having in mind that the forgetful type functors usually have
adjoints we can ask:

\emph{\textbf{Problem 5}: Let $H$ and $G$ be groups. Do the above
functors $F_{12}$, $F_{23}$ or the inclusion functor $i:
\mathcal{E}_1 (H, G)\to \mathcal{E}_3 (H, G)$ have right or left
adjoints? }

The classification part of the extending problem can be restated
as follows: describe the skeleton of the category $\mathcal{E}_1
(H, G)$. The following is of course natural and more general:

\emph{Let $H$ and $G$ be groups. Describe the skeleton of the
categories $\mathcal{E}_i (H, G)$, $i = 1$, $2$, $3$.}

The skeleton of the category $\mathcal{E}_1(H, G)$ is obtained
from \coref{morfisme23} as follows \footnote{In \cite{baez} it is
stated that the skeleton of the category $\mathcal{E}_1 (H, G)$ is
classified by weak $2$-functors $G \to AUT (H)$.} :

\begin{definition} Two normalized crossed systems $(H, G,
\alpha, f)$, $(H, G, \alpha', f') $ are called $1$-equivalently
and we denote it by $(H, G, \alpha,f) \approx_1 (H, G, \alpha',
f')$ if there exists a map $r: G\to H$ such that
\begin{eqnarray*}
g \rhd' h &=& r(g)^{-1} (g \rhd  h) r(g) \\
f' (g_1, g_2 ) &=& \bigl(g_1 \rhd ' r (g_2)^{-1} \bigl) r
(g_1)^{-1} f(g_1, g_2) r(g_1 g_2)
\end{eqnarray*}
for all $g$, $g_1$, $g_2\in G$ and $h\in H$.
\end{definition}

\coref{morfisme23} shows that $(H, G, \alpha,f) \approx_1 (H, G,
\alpha', f')$ if and only if there exists $\psi : (\alpha, f) \to
(\alpha', f')$ an isomorphism in $\mathcal{E}_1 (H, G)$. Thus
$\approx_1$ is an equivalence relation on the set of all
normalized crossed systems ${\rm Crossed}\, (H, G) $ and we have
proved that:

\begin{corollary} \colabel{clasclas}
Let $H$ and $G$ be two groups. There exists a bijection between
the set of objects of the skeleton of the category $\mathcal{E}_1
(H, G)$ and the quotient set ${\rm Crossed} \, (H, G) /\approx_1$.
\end{corollary}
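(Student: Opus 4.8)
The plan is to reduce the statement to the description of isomorphisms already obtained in \coref{morfisme23}, and then simply unwind the definition of a skeleton. Recall that a skeleton of a category is, by definition, a full subcategory containing exactly one object from each isomorphism class; consequently the set of objects of the skeleton of $\mathcal{E}_1 (H, G)$ is in canonical bijection with the set of isomorphism classes of objects of $\mathcal{E}_1 (H, G)$. Thus the whole problem amounts to deciding when two normalized crossed systems are isomorphic in $\mathcal{E}_1 (H, G)$ and matching those classes with the classes of $\approx_1$.

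First I would record that $\mathcal{E}_1 (H, G)$ is a groupoid, so that for two objects $(\alpha, f)$ and $(\alpha', f')$ the existence of a morphism $\psi : (\alpha, f) \to (\alpha', f')$ is equivalent to the two objects being isomorphic; in this category ``morphism'' and ``isomorphism'' carry the same existential content. Next I would invoke \coref{morfisme23}, which sets up a bijection between the group morphisms $\psi : H \#_{\alpha}^{f} \, G \to H \#_{\alpha'}^{f'} \, G$ making the stabilizing diagram \equref{diagramacrossed1} commute and the maps $r : G \to H$ satisfying \equref{471} and \equref{501}. By the definition of $\approx_1$, the existence of such an $r$ is precisely the assertion $(H, G, \alpha, f) \approx_1 (H, G, \alpha', f')$. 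Combining these two observations yields the pivotal equivalence: $(\alpha, f)$ and $(\alpha', f')$ are isomorphic in $\mathcal{E}_1 (H, G)$ if and only if $(H, G, \alpha, f) \approx_1 (H, G, \alpha', f')$.

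Finally I would assemble the bijection. Since $\approx_1$ is an equivalence relation on ${\rm Crossed} \, (H, G)$ (as already noted just before the statement), its equivalence classes partition the object set, and by the previous paragraph these classes coincide exactly with the isomorphism classes of $\mathcal{E}_1 (H, G)$. Sending each chosen representative object of the skeleton to its $\approx_1$-class then gives a well-defined map to ${\rm Crossed} \, (H, G) /\approx_1$; it is injective because distinct skeleton objects lie in distinct isomorphism classes, hence in distinct $\approx_1$-classes, and surjective because every $\approx_1$-class, being an isomorphism class, is represented by some object of the skeleton. This produces the desired bijection.

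I do not expect a genuine obstacle here, as the entire computational content has been front-loaded into \coref{morfisme23}; what remains is the purely formal translation through the notion of skeleton. The single point requiring care is to keep straight that ``morphism'' and ``isomorphism'' coincide in the groupoid $\mathcal{E}_1 (H, G)$, so that the description of morphisms in \coref{morfisme23} genuinely characterizes isomorphism of objects and therefore the isomorphism classes one must count.
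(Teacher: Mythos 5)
Your proposal is correct and follows essentially the same route as the paper: the paper likewise deduces from \coref{morfisme23} (together with the fact that every morphism in $\mathcal{E}_1(H,G)$ is an isomorphism) that $\approx_1$ coincides with isomorphism in $\mathcal{E}_1(H,G)$, and then reads off the bijection from the definition of a skeleton. Your write-up merely makes the formal skeleton bookkeeping more explicit than the paper does.
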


Now we shall describe the skeleton of the category $\mathcal{E}_2
(H, G)$. First we need the following:

\begin{proposition}
Let $H$, $G$ be two groups. Then $(\eta, \psi, \gamma) : (\alpha,
f) \to (\alpha', f')$ is a morphism of $\mathcal{E}_2 (H, G)$ if
and only if $\eta : H\to H$, $\gamma: G\to G$ are morphisms of
groups and there exists a unique map $r: G\rightarrow H$ such
that:
\begin{eqnarray}
\eta \bigl( g \rhd h \bigl) r(g) &=& r(g) \bigl(\gamma(g) \rhd'
\eta(h) \bigl) \eqlabel{gen1} \\
r (g_1) \bigl( \gamma(g_1) \rhd'  r (g_2) \bigl) f' \bigl (\gamma
(g_1), \gamma (g_2)\bigl) &=& \eta \bigl( f(g_1, g_2)\bigl)r(g_1
g_2) \eqlabel{gen2} \\
\psi (h,g) &=& \Bigl( \eta(h) r(g), \gamma(g) \Bigl)
\eqlabel{gen291}
\end{eqnarray}
for all $h\in H$, $g$, $g_1$, $g_2\in G$.
\end{proposition}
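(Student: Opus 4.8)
The plan is to deduce everything from \coref{morfisme2}, which already classifies the group morphisms $\psi : H \#_{\alpha}^{f}\, G \to H \#_{\alpha'}^{f'}\, G$ by quadruples $(u,r,v,s)$, with $u:H\to H$, $r:G\to H$, $v:G\to G$ maps and $s:H\to G$ a morphism of groups subject to five compatibility conditions, via the formula $\psi(h,g) = \bigl(u(h)(s(h)\rhd' r(g))f'(s(h),v(g)),\, s(h)v(g)\bigr)$. The only additional data carried by a morphism of $\mathcal{E}_2(H,G)$ is the commutativity of the ladder $\equref{diamore2}$, i.e. $\psi\circ i_H = i_H'\circ\eta$ and $\pi_G'\circ\psi = \gamma\circ\pi_G$. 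So the whole argument reduces to translating these two equalities into constraints on the quadruple $(u,r,v,s)$ and then simplifying the five compatibility conditions accordingly.

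For the forward implication I would take the quadruple $(u,r,v,s)$ attached to $\psi$ and evaluate on $i_H(h)=(h,1)$. Using $r(1)=v(1)=1$ and the normalization identities $\equref{norm2}$ one gets $\psi(h,1)=\bigl(u(h),s(h)\bigr)$; comparing with $i_H'(\eta(h))=(\eta(h),1)$ the left-hand square forces $u=\eta$ and $s\equiv 1$. Feeding $s\equiv 1$ into the right-hand square $\pi_G'\circ\psi=\gamma\circ\pi_G$ then yields $v=\gamma$. Substituting $u=\eta$, $v=\gamma$, $s\equiv 1$ into the displayed formula for $\psi$ and simplifying with $\equref{norm2}$ (so that $1\rhd' r(g)=r(g)$ and the $f'$-terms disappear) gives exactly $\equref{gen291}$, while the same substitution collapses the five compatibility conditions: the first and third become the assertions that $\gamma$ and $\eta$ are morphisms of groups, the second becomes trivial, and the remaining two become precisely $\equref{gen1}$ and $\equref{gen2}$. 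Uniqueness of $r$ is inherited directly from the uniqueness of the quadruple in \coref{morfisme2}.

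For the converse I would run this substitution backwards: given morphisms $\eta,\gamma$ and a map $r$ satisfying $\equref{gen1}$ and $\equref{gen2}$, put $u=\eta$, $v=\gamma$ and let $s$ be the trivial morphism $s\equiv 1$. These choices satisfy the five conditions of \coref{morfisme2}, so the resulting map $\psi(h,g)=\bigl(\eta(h)r(g),\gamma(g)\bigr)$ is a morphism of groups realizing $\equref{gen291}$. It then remains to verify commutativity of $\equref{diamore2}$: the square involving $\pi_G$ is immediate since $\pi_G'\bigl(\psi(h,g)\bigr)=\gamma(g)$, and for the square involving $i_H$ one needs $r(1)=1$, which I expect to be the only genuinely nontrivial point. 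I would obtain it by specializing $\equref{gen2}$ at $g_1=g_2=1$: using $\gamma(1)=1$, $f(1,1)=1$, $\eta(1)=1$ together with $\equref{norm2}$ this collapses to $r(1)^2=r(1)$, whence $r(1)=1$ and $\psi(h,1)=(\eta(h),1)=i_H'(\eta(h))$. Apart from this observation the proof is essentially a bookkeeping exercise resting on \coref{morfisme2}, the only thing requiring care being the systematic use of the normalization identities $\equref{norm2}$ when cancelling the cocycle terms.
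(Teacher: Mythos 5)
Your proposal is correct and follows essentially the same route as the paper: both deduce the statement from \coref{morfisme2} by using commutativity of the diagram \equref{diamore2} to force $u=\eta$, $v=\gamma$, $s\equiv 1$, and then observing that the five compatibility conditions collapse to the group-morphism property of $\eta,\gamma$ together with \equref{gen1}--\equref{gen2}. Your treatment of the converse is in fact slightly more careful than the paper's, which leaves implicit the verification that $r(1)=1$ (needed for $\psi\circ i_H=i_H'\circ\eta$); your derivation of it from \equref{gen2} at $g_1=g_2=1$ is the right way to close that point.
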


\begin{proof}
Since $\psi: H \#_{\alpha}^{f} \, G\rightarrow H \#_{\alpha'}^{f'}
\, G$ is a morphism of groups, from \coref{morfisme2} there exists
a unique quadruple $(u, r, v, s)$, where $u : H \to H$, $r: G\to
H$, $ v: G \to G$ are three maps, and $s: H \to G$ is a morphism
of groups satisfying the compatibility conditions from
\coref{morfisme2}. On the other hand, the diagram
\equref{diamore2} is commutative: thus $\psi(h,1) = (\eta(h), 1) =
(u(h), s(h))$ and $\gamma(g) = s(h)v(g)$. It follows that $u =
\eta$, $v = \gamma$ and $s(h) = 1$ for any $h \in H$. Now, the
first three compatibility conditions of \coref{morfisme2} are
equivalent to $\gamma $ and $\eta$ being morphisms of groups (as
$u$ and $v$ are morphisms and $s(h) = 1$ for all $h$), and the
last two compatibility conditions of \coref{morfisme2} are reduced
to the conditions \equref{gen1}-\equref{gen2}.
\end{proof}

\begin{corollary}\colabel{schclas}
Let $H$ and $G$ be two groups. Then $(\eta, \psi, \gamma) : (
\alpha, f) \to (\alpha', f')$ is an isomorphism in $\mathcal{E}_2
(H, G)$ if and only if $\eta : H\to H$, $\gamma: G\to G$ are
isomorphisms of groups and there exists a unique map $t:
G\rightarrow H$ such that:
\begin{eqnarray}
g \rhd' h &=& \eta \Bigl( t(g) \, \bigl(\gamma^{-1} (g) \rhd
\eta^{-1} (h) \bigl) \, t(g)^{-1} \Bigl)   \eqlabel{gen1a} \\
f' (g_1, g_2) &=& \eta \Bigl( t(g_1) \, \bigl(\gamma^{-1} (g_1)
\rhd t(g_2) \bigl) \, f \bigl(\gamma^{-1} (g_1), \gamma^{-1} (g_2)
\bigl )\, t(g_1g_2)^{-1} \Bigl)  \eqlabel{gen1b}
\end{eqnarray}
for all $h\in H$, $g$, $g_1$, $g_2\in G$. Moreover, the
isomorphism $\psi : H \#_{\alpha}^{f} \, G\rightarrow H
\#_{\alpha'}^{f'} \, G$ is given by the formula:
\begin{eqnarray*}
\psi(h, g) &=& \Bigl( \eta \bigl(h\, t(\gamma (g))^{-1} \bigl ),
\gamma(g) \Bigl)
\end{eqnarray*}
for all $h\in H$ and $g\in G$.
\end{corollary}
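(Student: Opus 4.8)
The plan is to proceed in two stages: first reduce the notion of isomorphism in $\mathcal{E}_2(H,G)$ to a statement purely about $\eta$ and $\gamma$, and then rewrite the two compatibility conditions of the preceding Proposition in terms of a new parameter $t$.

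For the first stage, I observe that composition in $\mathcal{E}_2(H,G)$ is componentwise and the identity morphism is $(Id_H,Id,Id_G)$, so $(\eta,\psi,\gamma)$ is an isomorphism if and only if it admits a two-sided inverse triple, which forces $\eta$, $\psi$ and $\gamma$ each to be isomorphisms of groups. Conversely, suppose $\eta$ and $\gamma$ are isomorphisms; I will show that $\psi$ is then automatically bijective, hence an isomorphism, and that the inverse triple $(\eta^{-1},\psi^{-1},\gamma^{-1})$ is again a morphism of $\mathcal{E}_2(H,G)$. Indeed, from the formula $\psi(h,g)=(\eta(h)r(g),\gamma(g))$ of \equref{gen291} one solves $\psi(h,g)=(h',g')$ uniquely by $g=\gamma^{-1}(g')$ and $h=\eta^{-1}\bigl(h'\,r(\gamma^{-1}(g'))^{-1}\bigr)$, while inverting the commuting square \equref{diamore2} shows that the inverse triple is a morphism. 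Thus $(\eta,\psi,\gamma)$ is an isomorphism precisely when $\eta$ and $\gamma$ are isomorphisms of groups, and I may assume this henceforth.

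For the second stage, I introduce the substitution $t(g):=\eta^{-1}\bigl(r(\gamma^{-1}(g))\bigr)^{-1}$, equivalently $r(g)=\eta\bigl(t(\gamma(g))^{-1}\bigr)$. Since $\eta$ and $\gamma$ are bijective, this is a bijective correspondence between maps $r:G\to H$ and maps $t:G\to H$, so the uniqueness of $r$ granted by the preceding Proposition yields at once the uniqueness of $t$. Substituting this expression for $r$ into $\psi(h,g)=(\eta(h)r(g),\gamma(g))$ and using that $\eta$ is a homomorphism immediately gives $\psi(h,g)=\bigl(\eta(h\,t(\gamma(g))^{-1}),\gamma(g)\bigr)$, the asserted formula for $\psi$.

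It then remains to convert \equref{gen1} and \equref{gen2} into \equref{gen1a} and \equref{gen1b}. For \equref{gen1}, I solve for the primed action, $\gamma(g)\rhd'\eta(h)=r(g)^{-1}\eta(g\rhd h)r(g)$, perform the change of variables $g\mapsto\gamma^{-1}(g)$, $h\mapsto\eta^{-1}(h)$, insert $r(\gamma^{-1}(g))=\eta(t(g)^{-1})$, and collect the three $\eta$-factors into one via the homomorphism property; this produces exactly \equref{gen1a}. The analogous treatment of \equref{gen2} is where the main work lies: after isolating $f'(\gamma(g_1),\gamma(g_2))$ and substituting $g_i\mapsto\gamma^{-1}(g_i)$ (using $\gamma^{-1}(g_1)\gamma^{-1}(g_2)=\gamma^{-1}(g_1g_2)$), I must rewrite the term $g_1\rhd'\eta(t(g_2))$ by means of the already-established \equref{gen1a}, and then check that all resulting $\eta$-factors amalgamate and that the stray pair $t(g_1)^{-1}t(g_1)$ cancels. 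This bookkeeping — tracking which action ($\rhd$ or $\rhd'$) and which map ($\eta$ or $\eta^{-1}$) applies, and verifying the cancellation — is the only delicate point; once it is done, \equref{gen1b} drops out. Since every step is a reversible rewriting under the standing assumption that $\eta$ and $\gamma$ are isomorphisms, the chain of equivalences establishes the Corollary.
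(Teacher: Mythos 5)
Your proposal is correct and follows essentially the same route as the paper: both reduce to the parametrization $\psi(h,g)=(\eta(h)r(g),\gamma(g))$ from the preceding Proposition, pass to $t$ via $r(g)=\eta\bigl(t(\gamma(g))^{-1}\bigr)$, and transcribe \equref{gen1}--\equref{gen2} into \equref{gen1a}--\equref{gen1b}. The only difference is cosmetic: you introduce $t$ as an explicit change of variables (and argue somewhat more carefully that invertibility of the triple is equivalent to $\eta$ and $\gamma$ being isomorphisms), whereas the paper obtains the same relation $r(g)=\eta\bigl(t(\gamma(g))^{-1}\bigr)$ by parametrizing $\psi^{-1}$ and evaluating $\psi\circ\psi^{-1}$ and $\psi^{-1}\circ\psi$ on generators.
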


\begin{proof}
$\psi : H \#_{\alpha}^{f} \, G\rightarrow H \#_{\alpha'}^{f'} \,
G$ given by \equref{gen291} is an isomorphism if and only if it is
bijective. A morphism of groups $\psi^{-1} : H \#_{\alpha'}^{f'}\,
G \to H \#_{\alpha}^{f}\, G$ that makes the diagram
\equref{diamore2} commutative has the form
$$
\psi^{-1} (h, g) = \bigl( \eta^{-1} (h) t (g), \gamma^{-1}(g)
\bigl)
$$
for a unique map $t : G \to H$ that satisfies two compatibility
conditions similar to \equref{gen1}, \equref{gen2}. Now, if we
write $\psi \circ \psi^{-1} = {\rm Id}_{H \#_{\alpha'}^{f'} \, G}$
and $\psi^{-1} \circ \psi = {\rm Id}_{H \#_{\alpha}^{f} \, G}$ on
the set of generators $(h,1)$ and $(1,g)$ we obtain the relations
$$
\eta^{-1} \bigl( r(g) \bigl ) t \bigl(\gamma(g) \bigl) = 1, \quad
\eta \bigl ( t(g) \bigl) r (\gamma^{-1} (g) ) = 1
$$
or equivalently
$$
r(g) = \eta \bigl( t (\gamma (g))^{-1} \bigl)
$$
for all $g\in G$. With this $r$ the compatibility conditions
\equref{gen1}, \equref{gen2} give exactly  \equref{gen1a},
\equref{gen1b}.
\end{proof}

\begin{remark}
In particular, if we specialize  \coref{schclas} for $(H, G,
\alpha, f)$, the trivial crossed system, we obtain a necessary and
sufficient condition for a crossed product to be isomorphic to a
direct product in the category $\mathcal{E}_2 (H, G)$. More
precisely, $\psi: H\times G \to H \#_{\alpha'}^{f'} \, G$ is an
isomorphism in $\mathcal{E}_2 (H, G)$  if and only if there exists
a pair $(\eta, t)$, where $\eta \in \Aut (H)$ is an automorphism
of $H$, $t: G\rightarrow H$ is a map such that:
\begin{eqnarray*}
g \rhd' h &=& \eta \Bigl( t(g) \,
\eta^{-1} (h)  \, t(g)^{-1} \Bigl)  \\
f' (g_1, g_2) &=& \eta \Bigl( t(g_1) t(g_2) t(g_1g_2)^{-1} \Bigl)
\end{eqnarray*}
for all $h\in H$, $g$, $g_1$, $g_2\in G$.
\end{remark}

\begin{definition} Two normalized crossed systems $(H, G,
\alpha, f)$, $(H, G, \alpha', f')$ are called $2$-equivalently and
we denote it by $(H, G, \alpha,f) \approx_2 (H, G, \alpha', f')$
if there exists a triple $(\eta, \gamma, t)$, where $\eta :
H\rightarrow H$, $\gamma : G\rightarrow G$ are isomorphisms of
groups, $t : G\rightarrow H$ is a map such that the compatibility
conditions \equref{gen1a}, \equref{gen1b} hold.
\end{definition}

\coref{schclas} shows that $(H, G, \alpha,f) \approx_2 (H, G,
\alpha', f')$ if and only if there exists $(\eta, \psi, \gamma) :
(\alpha, f) \to (\alpha', f')$ an isomorphism in $\mathcal{E}_2
(H, G)$. Thus $\approx_2$ is an equivalence relation on the set of
all crossed systems ${\rm Crossed}\, (H, G) $ and we have proved
the following classification result that is a general Schreier's
type theorem:

\begin{theorem} \thlabel{teoprinci}
Let $H$ and $G$ be two groups. There exists a bijection between
the set of objects of the skeleton of the category $\mathcal{E}_2
(H, G)$ and the quotient set ${\rm Crossed} \, (H, G) /\approx_2$.
\end{theorem}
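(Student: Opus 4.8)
The plan is to derive this statement as an essentially formal consequence of \coref{schclas} together with the general definition of a skeleton. Recall that a skeleton of a category $\Cc$ is a full subcategory meeting each isomorphism class of objects in exactly one object; consequently the class of objects of any skeleton is in canonical bijection with the class of isomorphism classes of objects of $\Cc$. Applying this principle to $\Cc = \mathcal{E}_2 (H, G)$, whose objects are by construction exactly the elements of ${\rm Crossed} \, (H, G)$, the theorem reduces to showing that the \emph{isomorphism relation} on objects of $\mathcal{E}_2 (H, G)$ coincides with $\approx_2$.

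This last identification is precisely the content of \coref{schclas}: a morphism $(\eta, \psi, \gamma) : (\alpha, f) \to (\alpha', f')$ is an isomorphism in $\mathcal{E}_2 (H, G)$ if and only if $\eta$ and $\gamma$ are isomorphisms of groups and there exists a unique map $t : G \to H$ satisfying \equref{gen1a} and \equref{gen1b} — which is exactly the defining condition of $(H, G, \alpha, f) \approx_2 (H, G, \alpha', f')$. I would next observe that $\approx_2$ is automatically an equivalence relation: although $\mathcal{E}_2 (H, G)$ is \emph{not} a groupoid, the relation ``being isomorphic'' in any category is reflexive (take the identity triple), symmetric (take the inverse isomorphism, which again lies in $\mathcal{E}_2 (H, G)$ since the vertical maps $\eta$, $\gamma$ are invertible and the inverse diagram commutes) and transitive (compose two isomorphisms, whose composite is again of the required form with $\eta$, $\gamma$ composed). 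Hence the quotient ${\rm Crossed} \, (H, G)/\approx_2$ is well defined and equals precisely the set of isomorphism classes of objects of $\mathcal{E}_2 (H, G)$.

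Combining the two steps yields the asserted bijection, with the skeleton realised by choosing one representative normalized crossed system from each $\approx_2$-class. The only genuine computation involved — already carried out in the proof of \coref{schclas} — is the passage from a pair of mutually inverse morphisms to the single governing map $t$, together with the verification that the inverse of an $\mathcal{E}_2$-isomorphism is again of the prescribed triangular shape. At the level of the present theorem no new difficulty arises: the argument is simply the assembly of the definition of a skeleton, the equivalence established in \coref{schclas}, and the elementary fact that categorical isomorphism is an equivalence relation.
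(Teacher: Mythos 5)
Your proposal is correct and follows essentially the same route as the paper: the statement is obtained directly by combining \coref{schclas} (which identifies the isomorphism relation in $\mathcal{E}_2(H,G)$ with $\approx_2$, and thereby shows $\approx_2$ is an equivalence relation) with the fact that the objects of a skeleton biject with the isomorphism classes of objects. The only difference is presentational: you spell out explicitly the reflexivity, symmetry and transitivity of categorical isomorphism, which the paper leaves implicit.
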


We end the paper with the following:

\emph{\textbf{Problem 6}: Construct the crossed product for
groupoids and generalize the results presented in this paper to
the level of groupoids.}

We recall that a groupoid is a small category in which any
morphism is an isomorphism. This way, groups are groupoids with
only one object. The construction of the crossed product for
groupoids must be made in such a way that its generalization to
the level of Hopf algebroids agrees with the one recently
constructed in \cite{BB}.

\textbf{Acknowledgement:} We thank the referees for their detailed
suggestions that have helped us improve this paper.

\end{document}